\begin{document}

\title{The Beta-Wishart Ensemble} 

\author{Alexander Dubbs}
\email[]{dubbs@math.mit.edu}
\affiliation{Massachusetts Institute of Technology}

\author{Alan Edelman}
\email[]{edelman@math.mit.edu}
\affiliation{Massachusetts Institute of Technology}

\author{Plamen Koev}
\email[]{plamen.koev@sjsu.edu}
\affiliation{San Jose State University}

\author{Praveen Venkataramana}
\email[]{venkap@mit.edu}
\affiliation{Massachusetts Institute of Technology}

\date{\today}

\begin{abstract}
We introduce a ``Broken-Arrow'' matrix model for the $\beta$-Wishart ensemble, which improves on the traditional bidiagonal model by generalizing to non-identity covariance parameters. We prove that its joint eigenvalue density involves the correct hypergeometric function of two matrix arguments, and a continuous parameter $\beta > 0$.

If we choose $\beta = 1,2,4$, we recover the classical Wishart ensembles of general covariance over the reals, complexes, and quaternions. The derivation of the joint density requires an interesting new identity about Jack polynomials in $n$ variables. Jack polynomials are often defined as the eigenfunctions of the Laplace-Beltrami Operator. We prove that Jack polynomials are in addition eigenfunctions of an integral operator defined as an average over a $\beta$-dependent measure on the sphere. When combined with an identity due to Stanley, \cite{Stanley1989} we derive a new definition of Jack polynomials.

An efficient numerical algorithm is also presented for simulations. The algorithm makes use of secular equation software for broken arrow matrices currently unavailable in the popular technical computing languages. The simulations are matched against the cdf's for the extreme eigenvalues.

The techniques here suggest that arrow and broken arrow matrices  can play an important role in theoretical and computational random matrix theory including the study of corners processes.
\end{abstract}

\maketitle

\newtheorem{thm}{Theorem}
\newtheorem{lem}{Lemma}
\newtheorem{cor}{Corollary}
\newtheorem{definition}{Definition}

\section{Introduction}
\label{Introduction}

A real $m\times n$ Wishart matrix $W(D,m,n)$ is the random matrix $Z^tZ$ where $Z$ consists of $n$ columns of length $m$, each of which is a multivariate normal with mean $0$ and covariance $D$. We can assume without loss of generality that $D$ is a non-negative diagonal matrix. We may write $Z$ as \verb randn(m,n)*sqrt(D)  using the notation of modern technical computing software. Real Wishart matrices arise in such applications as likelihood-ratio tests (summarized in Ch. 8 of Muirhead\cite{Muirhead1982}), multidimensional Bayesian analysis\cite{Bekker1995},\cite{Evans1964}, and random matrix theory in general\cite{James}.

For the special case that $D = I$, the real Wishart matrix is also known as the $\beta=1$ Laguerre ensemble. The complex and quaternion versions correspond to $\beta=2$ and $\beta=4$, respectively. The method of bidiagonalization has been very successful \cite{Dumitriu2002} in creating matrix models that generalize the Laguerre ensemble to arbitrary $\beta$. The key features of the Dumitriu and Edelman model\cite{Dumitriu2002} appear in the box below: \vskip .1in

\noindent\fbox{\begin{minipage}{6.2in}
\begin{center}
\underline{Beta-Laguerre Model} \\ \vskip .1in
$B = \left[\begin{array}{ccccc}
\chi_{m\beta} & \chi_{(n-1)\beta} & & & \\
& \chi_{(m-1)\beta} & \chi_{(n-2)\beta} & & \\
& & \ddots & \ddots & \\
& & & \chi_{(m-n+2)\beta} & \chi_{\beta} \\
& & & & \chi_{(m-n+1)\beta}
\end{array}\right]$ \\
\end{center}
Let $n$ be a positive integer and $m$ be a real greater than $n-1$. ${\rm eig}(B^tB)$ has density:
\begin{center}
$C_\beta^{L}\prod_{i=1}^{n}\lambda_i^{\frac{m-n+1}{2}\beta-1}\prod_{i<j}|\lambda_i-\lambda_j|^{\beta}{}\exp(-\frac{1}{2}\sum_{i=1}^{n}\lambda_i)d\lambda$\\ \vskip .1in
\end{center}
\end{minipage}
} \vskip .1in

For general $D$, it is desirable to create a general $\beta$ model as well. For $\beta = 1,2,4$, it is obviously possible to bidiagonalize a full matrix of real, complex, or quaternionic normals and obtain a real bidiagonal matrix. However these bidiagonal models do not appear to generalize nicely to arbitrary $\beta$. Therefore we propose a new broken-arrow model that possesses a number of very good mathematical (and computational) possibilities. These models connect to the mathematical theory of Jack polynomials and the general-$\beta$ hypergeometric functions of matrix arguments. The key features of the broken arrow models appear in the box below:

\noindent\fbox{\begin{minipage}{6.2in}
\begin{center}
\underline{Beta-Wishart (Recursive) Model, $W^{(\beta)}(D,m,n)$} \\ \vskip .1in
$Z = \left[\begin{array}{cccr}
\tau_1 &            &                   & \chi_{\beta}D_{n,n}^{1/2} \\
           & \ddots &                  & \vdots \\
           &            & \tau_{n-1} & \chi_{\beta}D_{n,n}^{1/2} \\
           &            &                  & \chi_{(m-n+1)\beta}D_{n,n}^{1/2}
\end{array}\right],$ \\ \vskip .1in
\end{center}
where $\{\tau_1,\ldots,\tau_{n-1}\}$ are the singular values of $W^{(\beta)}(D_{1:n-1,1:n-1},m,n-1)$, base case $W(D,m,1)$ is $\tau_1 = \chi_{m\beta}D_{1,1}^{1/2}$. Let $n$ be a positive integer and $m$ a real greater than $n-1$. Let $\Lambda = {\rm diag}(\lambda_1,\ldots,\lambda_n)$, ${\rm eig}(Z^tZ)$ has density:\begin{center}
$C_\beta^{W}\det(D)^{-m\beta/2}\prod_{i=1}^{n}\lambda_i^{\frac{m-n+1}{2}\beta-1}\Delta(\lambda)^{\beta}\cdot {{}_0F_0}^{(\beta)}\left(-\frac{1}{2}\Lambda,D^{-1}\right)d\lambda.$\\ \end{center} \vskip .1in
\end{minipage}
} \vskip .1in

Theorem 3 proves that ${\rm eig}(Z^tZ)$ is distributed by the formula above. This generalizes the work on the special cases ($\beta = 1$)\cite{James}, ($\beta = 2$)\cite{Ratnarajah2004}, and ($\beta = 4$)\cite{Li2009}  which found the eigenvalue distributions for full-matrix Wishart ensembles for their respective $\beta$'s. While the model appears straightforward, there are a number of hidden mathematical and computational complexities. The proof of Theorem 3, that the broken arrow model has the joint eigenvalue density in the box above, relies on a theorem about Jack polynomials which appears to be new (Corollary 1 to Theorem 2):

\noindent\textbf{Corollary 1.} \textit{Let $C_\kappa^{(\beta)}(\Lambda)$ be the Jack polynomial under the $C$-normalization and $dq$ be the surface area element on the positive quadrant of the unit sphere. Then}
$$ C_\kappa^{(\beta)}(\Lambda) = 
\frac{C_\kappa^{(\beta)}(I_n)}{C_\kappa^{(\beta)}(I_{n-1})}\cdot\frac{2^{n-1}\Gamma(n\beta/2)}{\Gamma(\beta/2)^n}\cdot\int\prod_{i=1}^{n}q_i^{\beta-1}C_\kappa^{(\beta)}((I-qq^t)\Lambda)dq. $$
\textit{Equivalently,}
$$ C_\kappa^{(\beta)}(\Lambda) \propto E_q(C_\kappa^{(\beta)}({\rm Proj}_{q^{\perp}}\Lambda)), $$
\textit{where the expectation is taken over length $n$ vectors of $\chi_\beta$'s renormalized to lie on the unit sphere.}

Given a unit vector $q$, one can create a projected Jack polynomial of a symmetric matrix by projecting out the direction $q$. One can reconstruct the Jack polynomial by averaging over all directions $q$. This process is reminiscent of integral geometry \cite{Santalo1976} or computerized tomography. For $\beta = 1$, the measure on the sphere is the uniform measure, for other $\beta$'s the measure on the sphere is proportional to $\prod_{i=1}^{n}q_i^{\beta-1}$. In addition, using the above formula with a fact from Stanley\cite{Stanley1989}, we derive a new definition of the Jack polynomials (see Corollary 2).

\noindent\textbf{Remark:} Since deriving Corollary 1, we learned that it may be obtained  from Okounkov and Olshanski\cite{Okounkov1997} (Proposition on p. 8) through a change of variables.  We found Corollary 1 on our own, as we needed it for our matrix model to give the correct answer. We thank Alexei Borodin for interesting discussions to help us see the connection. An interesting lesson we learned is that Jack Polynomials with matrix argument notation can reveal important relationships that multivariate argument notation can hide.  

Inspiration for this broken arrow model came from the mathematical method of ghosts and shadows \cite{Edelman2009} and numerical techniques for the svd updating problem. Algorithms for updating the svd one column at a time were first considered by Bunch and and Nielson\cite{Bunch1978}. Gu and Eisenstat\cite{Gu1994} present an improvement using the fast multipole method. The random matrix context here is simpler than the numerical situation in that orthogonal invariance replaces the need for singular vector updating.

Software for efficiently computing the svd of broken arrow matrices is unavailable in the currently popular technical computing languages such as MATLAB, Mathematica, Maple, R, or Python. The sophisticated secular equation solver, LAPACK's dlasd4.f, efficiently computes the singular values of a broken arrow matrix. Using this software, we can sample the eigenvalues of $W^{(\beta)}(D,m,n)$ in $O(n^3)$ time and $O(n)$ space.

In this paper we perform a number of numerical simulations as well to confirm the correctness and illustrate applications of the model.  Among these simulations are largest and smallest eigenvalue densities. We also use free probability to histogram the eigenvalues of $W^{(\beta)}(D,m,n)$ for general $\beta,m,n$ and $D$ drawn from a prior, and show that they match the analytical predictions of free probability made by Olver and Nadakuditi\cite{Olver2013}.

\section{Real, Complex, Quaternion, and Ghost Wishart Matrices}

Let $G_\beta$ represent a Gaussian real, complex, or quaternion for $\beta = 1,2,4$, with mean zero and variance one. Let $\chi_d$ be a $\chi$-distributed real with $d$ degrees of freedom. The following algorithm computes the singular values, where all of the random variables in a given matrix are assumed independent. We assume $D=I$ for purposes of illustration, but this algorithm generalizes. We proceed through a series of matrices related by orthogonal transformations on the left and the right.
$$ 
\left[\begin{array}{ccc}
G_\beta & G_\beta & G_\beta \\
G_\beta & G_\beta & G_\beta \\
G_\beta & G_\beta & G_\beta
\end{array}\right]
\longrightarrow
\left[\begin{array}{ccc}
\chi_{3\beta} & G_\beta & G_\beta \\
0 & G_\beta & G_\beta \\
0 & G_\beta & G_\beta
\end{array}\right]
\longrightarrow
\left[\begin{array}{ccc}
\chi_{3\beta} & \chi_\beta & G_\beta \\
0 & \chi_{2\beta} & G_\beta \\
0 & 0 & G_\beta
\end{array}\right]
$$
To create the real, positive $(1,2)$ entry, we multiply the second column by a real sign, or a complex or quaternionic phase. We then use a Householder reflector on the bottom two rows to make the $(2,2)$ entry a $\chi_{2\beta}$. Now we take the SVD of the $2\times 2$ upper-left block:
$$
\left[\begin{array}{ccc}
\tau_1 & 0 & G_\beta \\
0 & \tau_2 & G_\beta \\
0 & 0 & G_\beta
\end{array}\right]
\longrightarrow
\left[\begin{array}{ccc}
\tau_1 & 0 & \chi_\beta \\
0 & \tau_2 & \chi_\beta \\
0 & 0 & \chi_\beta
\end{array}\right]
\longrightarrow
\left[\begin{array}{ccc}
\sigma_1 & 0 & 0 \\
0 & \sigma_2 & 0 \\
0 & 0 & \sigma_3
\end{array}\right]
$$
We convert the third column to reals using a diagonal matrix of signs on both sides. The process can be continued for a larger matrix, and can work with one that is taller than is wide. What it proves is that the second-to-last-matrix,
$$\left[\begin{array}{ccc}
\tau_1 & 0 & \chi_\beta \\
0 & \tau_2 & \chi_\beta \\
0 & 0 & \chi_\beta
\end{array}\right],$$
has the same singular values as the first matrix, if $\beta = 1,2,4$. We call this new matrix a ``Broken-Arrow Matrix.'' It is reasonable to conjecture that such a procedure might work for all $\beta > 0$ in a way not yet defined. This idea is the basis of the method of ghosts and shadows \cite{Edelman2009}. We prove that for a general broken arrow matrix model, the singular value distribution is what the method of ghosts and shadows predicts for a $\beta$-dimensional algebra.

The following algorithm, which generalizes the one above for the $3\times 3$ case, samples the singular values of the Wishart ensemble for general $\beta$ and general $D$.

\fbox{
\begin{minipage}{5.6 in}
\begin{center}
\underline{Beta-Wishart (Recursive) Model Pseudocode}
\end{center}
\begin{algorithmic}
\State \textbf{Function} $\Sigma$ := BetaWishart$(m,n,\beta,D)$
\If{$n = 1$}
	\State $\Sigma$ := $\chi_{m\beta}D_{1,1}^{1/2}$
\Else
	\State $Z_{1:n-1,1:n-1}$ := BetaWishart$(m,n-1,\beta,D_{1:n-1,1:n-1})$
	\State $Z_{n,1:n-1}$ := $[0,\ldots,0]$
	\State $Z_{1:n-1,n}$ := $[\chi_\beta D_{n,n}^{1/2};\ldots;\chi_{\beta}D_{n,n}^{1/2}]$
	\State $Z_{n,n}$ := $\chi_{(m-n+1)\beta}D_{n,n}^{1/2}$
	\State $\Sigma$ := diag(svd($Z$))
\EndIf
\end{algorithmic}
\end{minipage}
} \vskip .1 in

The diagonal of $\Sigma$ contains the singular values. Since we know the distribution of the singular values of such a full matrix for $(\beta = 1,2,4)$, \cite{James}, \cite{Ratnarajah2004}, \cite{Li2009}, we can state (using the normalization constant in Corollary 3, originally from Forrester\cite{Forrester1994}):

\begin{thm}
The distribution of the singular values ${\rm diag}(\Sigma) = (\sigma_1,\ldots,\sigma_n)$, $\sigma_1>\sigma_2>\cdots>\sigma_n$, generated by the above algorithm for $\beta = 1,2,4$ is equal to:
$$ \frac{2^{n}\det(D)^{-m\beta/2}}{\mathcal{K}_{m,n}^{(\beta)}}\prod_{i=1}^{n}\sigma_i^{(m-n+1)\beta-1}\Delta^2(\sigma)^\beta {{{}_0F_0}}^{\!\!(\beta)}\left(-\frac{1}{2}\Sigma^2,D^{-1}\right)d\sigma, $$
where
$$ \mathcal{K}^{(\beta)}_{m,n} = \frac{2^{mn\beta/2}}{\pi^{n(n-1)\beta/2}}\cdot\frac{\Gamma_n^{(\beta)}(m\beta/2)\Gamma_n^{(\beta)}(n\beta/2)}{\Gamma(\beta/2)^n}, $$
and the generalized Gamma function $\Gamma_n^{(\beta)}$ is defined in Definition 6.
\end{thm}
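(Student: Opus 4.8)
The plan is to reduce Theorem 1 to two facts that are already available once $\beta$ is restricted to $1,2,4$: the equivalence of the broken-arrow construction with an ordinary real, complex, or quaternion Wishart matrix, and the classical singular-value laws for those matrices together with Forrester's normalization constant. Because $\beta\in\{1,2,4\}$, there is no analytic content; the work is an induction on $n$ followed by bookkeeping of constants.

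First I would show, by induction on $n$, that for $\beta=1,2,4$ the matrix $Z$ returned by the recursive algorithm has the same singular values in law as $\mathrm{randn}(m,n)\sqrt{D}$ and its complex and quaternion analogues. The base case $n=1$ is immediate. For the inductive step, write the full matrix as $[A\mid b]$, with $A$ its first $n-1$ scaled columns and $b$ the last, independent of $A$; take an SVD $A=U\,\mathrm{diag}(\tau)\,V^{t}$ with $U$ in $\mathrm{O}(m)$ (resp. $\mathrm{U}(m)$, $\mathrm{Sp}(m)$) and $V$ in the corresponding group of size $n-1$, and absorb $U$ on the left and $\mathrm{diag}(V^{t},1)$ on the right. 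This preserves the singular values, installs $\mathrm{diag}(\tau)$ in the top-left $(n-1)\times(n-1)$ block with $\tau$ the singular values of $A$ — which by the inductive hypothesis are distributed as those of the order-$(n-1)$ model — and, by orthogonal invariance of $b$ (independent of $A$ and hence of $U$), leaves $U^{t}b$ an i.i.d. Gaussian vector independent of $\tau$. A Householder reflector on rows $n,\dots,m$, which have zero entries in the first $n-1$ columns so that nothing already reduced is disturbed, collapses the tail of the last column to a single $\chi_{(m-n+1)\beta}D_{nn}^{1/2}$; a unit phase applied to each of rows $1,\dots,n-1$, compensated by the matching phase on columns $1,\dots,n-1$ (whose only nonzero entries are the real positive $\tau_i$), turns the remaining entries of the last column into $\chi_{\beta}D_{nn}^{1/2}$'s. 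The result is exactly the broken arrow $Z$ of the algorithm, and the $3\times 3$ display of Section 2 is the case $n=3$.

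It then remains to combine the known joint eigenvalue density of $Z^{t}Z$ for the full Wishart with general covariance ($\beta=1$ from James \cite{James}, $\beta=2$ from \cite{Ratnarajah2004}, $\beta=4$ from \cite{Li2009}), namely $\mathrm{const}\cdot\prod_i\lambda_i^{\frac{m-n+1}{2}\beta-1}\,\Delta(\lambda)^{\beta}\,{}_0F_0^{(\beta)}(-\tfrac12\Lambda,D^{-1})\,d\lambda$ on $\lambda_1>\cdots>\lambda_n>0$, with the substitution $\lambda_i=\sigma_i^{2}$. This contributes the Jacobian $d\lambda=2^{n}\prod_i\sigma_i\,d\sigma$, turns $\lambda_i^{\frac{m-n+1}{2}\beta-1}$ into $\sigma_i^{(m-n+1)\beta-2}$, turns $\Delta(\lambda)^{\beta}$ into $\Delta(\sigma^{2})^{\beta}$ — the quantity written $\Delta^{2}(\sigma)^{\beta}$ in the statement — and rewrites ${}_0F_0^{(\beta)}(-\tfrac12\Lambda,D^{-1})$ as ${}_0F_0^{(\beta)}(-\tfrac12\Sigma^{2},D^{-1})$, producing the displayed density up to its constant. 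The constant is pinned down by evaluating $\int_{\sigma_1>\cdots>\sigma_n>0}\prod_i\sigma_i^{(m-n+1)\beta-1}\,\Delta^{2}(\sigma)^{\beta}\,{}_0F_0^{(\beta)}(-\tfrac12\Sigma^{2},D^{-1})\,d\sigma$; reverting to the $\lambda$ variables, this is a Wishart-type integral whose value is the content of Corollary 3 (originally Forrester \cite{Forrester1994}), yielding $\det(D)^{m\beta/2}\,\mathcal{K}_{m,n}^{(\beta)}/2^{n}$ with $\mathcal{K}_{m,n}^{(\beta)}$ assembled from the generalized Gamma functions of Definition 6. The main obstacle is purely clerical: tracking the $2^{n}$ from the change of variables against the powers of $2$ and $\pi$ hidden inside $\mathcal{K}_{m,n}^{(\beta)}$ and against the $\det(D)^{-m\beta/2}$ prefactor, and checking carefully in the inductive step that the reflector and the phases really leave the already-diagonalized block untouched and that $U^{t}b$ remains i.i.d. Gaussian.
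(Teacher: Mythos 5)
Your proposal is correct and follows essentially the same route as the paper: the paper's proof of Theorem 1 is exactly the Section 2 reduction of a full real/complex/quaternion Gaussian matrix to a broken arrow matrix by left/right orthogonal (unitary, symplectic) transformations, combined with the known $\beta=1,2,4$ joint densities of James, Ratnarajah et al., and Li--Xue and the Forrester normalization constant of Corollary 3. You merely make explicit the induction on $n$ and the $\lambda_i=\sigma_i^2$ change of variables that the paper leaves as a sketch, and those details check out.
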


Theorem 3 generalizes Theorem 1 to the $\beta > 0$ case. Before we can prove Theorem 3, we need some background.

\section{Arrow and Broken-Arrow Matrix Jacobians}

Define the (symmetric) Arrow Matrix
$$
A = \left[\begin{array}{cccc}
d_1 &              &               & c_1 \\
       & \ddots   &               & \vdots \\
       &              & d_{n-1} & c_{n-1} \\
c_1 & \cdots   & c_{n-1} & c_n
\end{array}\right].
$$
Let its eigenvalues be $\lambda_1,\ldots,\lambda_n$. Let $q$ be the last row of its eigenvector matrix, i.e. $q$ contains the $n$-th element of each eigenvector. $q$ is by convention in the positive quadrant.

Define the broken arrow matrix $B$ by
$$ B =
\left[\begin{array}{cccc}
b_1 &              &               & a_1 \\
       & \ddots   &               & \vdots \\
       &              & b_{n-1} & a_{n-1} \\
0 & \cdots   & 0 & a_n
\end{array}\right].
 $$
Let its singular values be $\sigma_1,\ldots,\sigma_n$, and let $q$ contain the bottom row of its right singular vector matrix, i.e. $A = B^tB$, $B^tB$ is an arrow matrix. $q$ is by convention in the positive quadrant.

Define $dq$ to be the surface-area element on the sphere in $\mathbb{R}^n$.

\begin{lem}
For an arrow matrix $A$, let $f$ be the unique map $f:(c,d)\longrightarrow(q,\lambda)$. The Jacobian of $f$ satisfies:
$$ dq d\lambda = \frac{\prod_{i=1}^{n}q_i}{\prod_{i=1}^{n-1}c_i}\cdot dcdd. $$
\end{lem}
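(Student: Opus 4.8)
The plan is to compute the Jacobian of the map $f:(c,d)\longmapsto(q,\lambda)$ by relating both coordinate systems to the entries of the arrow matrix $A$ and exploiting the secular equation. First I would recall that for an arrow matrix, the eigenvalues $\lambda_i$ are exactly the roots of the secular equation
$$ g(\lambda) := c_n - \lambda + \sum_{i=1}^{n-1}\frac{c_i^2}{d_i-\lambda} = 0, $$
and that the (unnormalized) eigenvector for $\lambda_j$ has last component proportional to $1$ and $i$-th component proportional to $c_i/(d_i-\lambda_j)$; normalizing gives
$$ q_j^2 = \left(1 + \sum_{i=1}^{n-1}\frac{c_i^2}{(d_i-\lambda_j)^2}\right)^{-1} = \frac{1}{-g'(\lambda_j)}. $$
These are the standard relations one gets from the deflation/secular-equation picture used by Bunch--Nielsen and Gu--Eisenstat. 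The key structural fact I would isolate is the pair of ``inverse'' formulas: given $(\lambda,q)$ one recovers $(c,d)$ because the $d_i$ are determined as the poles interlacing the $\lambda_j$ (Cauchy interlacing forces $d_i$ between consecutive $\lambda$'s, and the residue data $q$ pins them down), and then $c_i^2$ and $c_n$ are rational functions of $\lambda$ and $q$. This is what makes $f$ a bijection as the lemma asserts.

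Next I would factor $f$ through the entries of $A$. Rather than differentiate $f$ directly, I would compute two Jacobians: the map $(c,d)\mapsto A$ is trivial (it just reads off matrix entries, Jacobian $1$ after identifying $c_i\mapsto$ off-diagonal, $d_i\mapsto$ diagonal, $c_n\mapsto$ corner), and then use the classical eigenvalue/eigenvector Jacobian for symmetric matrices. More precisely, I would invoke the standard decomposition $A = Q\Lambda Q^t$ and the known fact that under the spectral map the measure $dA$ on symmetric matrices factors as $\Delta(\lambda)\,d\lambda$ times a measure on the orthogonal group $O(n)$; but here we do not have a full symmetric matrix, only the $2n-1$ parameters $(c,d)$. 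So the cleaner route is: parametrize $A$ by $(q,\lambda)$ through the secular relations and compute $\partial(c,d)/\partial(q,\lambda)$ as a product of two triangular-ish pieces. Concretely, write $c_i^2$ as an explicit product/ratio over the $\lambda_j$ and $d_i$ (Lagrange-interpolation form of the secular function), and note that $\prod_j q_j^2 = \prod_j (-g'(\lambda_j))^{-1}$ while $g'(\lambda_j)$ relates to $\prod_{i}(d_i-\lambda_j)$ and $\prod_{k\neq j}(\lambda_k-\lambda_j)$. Taking logarithmic derivatives turns everything into a Cauchy-type determinant.

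The main obstacle I anticipate is organizing the determinant bookkeeping so that the enormous product of factors $\prod(d_i-\lambda_j)$, $\prod(\lambda_j-\lambda_k)$, $\Delta(\lambda)$, $\Delta(d)$ all cancel cleanly, leaving only the advertised ratio $\prod_i q_i / \prod_i c_i$. The natural tool is the Cauchy determinant identity: after logarithmic differentiation the Jacobian matrix has block form built from matrices with $(i,j)$ entries like $1/(d_i-\lambda_j)$ or $1/(d_i-\lambda_j)^2$, whose determinants are Cauchy determinants (or derivatives thereof). I expect the computation to split as $\det[\partial c/\partial\lambda,\partial c/\partial q \mid \partial d/\partial\lambda, \partial d/\partial q]$ where the $d$-block is essentially independent of $q$ (since the $d_i$ are the poles, fixed once $\lambda$ and the residue ratios are fixed) giving an upper-triangular block structure, so the total Jacobian is a product of two smaller Cauchy-type determinants. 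Matching powers: each factor of $c_i$ enters quadratically in the $A$-parametrization but the eigenvector normalization contributes the compensating $q_i$'s and a single power survives. I would close by verifying the exponent count and the sign (both $q$ and $c$ chosen positive) so that the final identity is $dq\,d\lambda = \bigl(\prod_{i=1}^n q_i\bigr)\bigl(\prod_{i=1}^{n-1}c_i\bigr)^{-1}dc\,dd$ as stated; a small-$n$ check ($n=2$, where everything is elementary) would serve as a sanity test before trusting the general Cauchy-determinant cancellation.
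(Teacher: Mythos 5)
Your strategy --- differentiating the secular-equation parametrization directly and evaluating the Jacobian via Cauchy-type determinants --- is a genuinely different route from the paper's, which never computes a determinant from scratch: it conjugates the arrow matrix to a bordered tridiagonal matrix (the construction in Lemma 3), applies the known tridiagonal Jacobian of Dumitriu and Edelman (their Lemma 2.9) twice, and converts $dc_1\cdots dc_{n-1}=\gamma^{n-2}\,du\,d\gamma$ by polar coordinates, so the factor $\prod_i q_i/\prod_i c_i$ falls out of previously established results. A direct computation of the kind you describe can be made to work, but your write-up does not actually carry it out: the step that \emph{is} the lemma --- evaluating $\det\partial(c,d)/\partial(q,\lambda)$ and showing that all the factors $\prod_{i,j}(d_i-\lambda_j)$, $\Delta(\lambda)$, $\Delta(d)$ cancel to leave exactly $\prod_{i=1}^{n-1}c_i/\prod_{i=1}^n q_i$ --- is only announced (``I expect the computation to split\ldots'', ``I would close by verifying the exponent count''). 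No Cauchy determinant is exhibited or evaluated, so as it stands this is a plausible plan rather than a proof.

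There is also a concrete structural error in the one claim you do make about the determinant: you assert that the $d$-block is ``essentially independent of $q$,'' yielding a block-triangular Jacobian that factors into two smaller determinants. In the inverse map the $d_i$ are the roots of $\sum_j q_j^2/(\lambda_j-\mu)=0$ (the nonzero spectrum of the deflated matrix $(I-qq^t)\Lambda(I-qq^t)$), so they depend on $q$ just as much as on $\lambda$; already for $n=2$ one has $d_1=q_2^2\lambda_1+q_1^2\lambda_2$, so $\partial d/\partial q\neq 0$ and the claimed triangular splitting fails. You would have to face the full $(2n-1)\times(2n-1)$ determinant or find a different factorization (the paper's tridiagonal detour is precisely such a device). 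A smaller point: your secular function should read $g(\lambda)=c_n-\lambda-\sum_i c_i^2/(d_i-\lambda)$ (equivalently $+\sum_i c_i^2/(\lambda-d_i)$); with the sign you wrote, the identity $q_j^2=-1/g'(\lambda_j)$ you rely on is false, which is a symptom that the bookkeeping at the heart of this approach still needs to be done carefully. The ingredients you list ($q_j^2=(1+\sum_i c_i^2/(\lambda_j-d_i)^2)^{-1}$ and the Lagrange form of $c_i^2$ --- essentially Lemmas 4 and 5 of the paper) are correct raw material, but until the determinant is actually computed, or the problem is reduced to known Jacobians as in the paper, the lemma has not been established.
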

\noindent The proof is after Lemma 3.

\begin{lem}
For a broken arrow matrix $B$, let $g$ be the unique map $g:(a,b)\longrightarrow(q,\sigma)$. The Jacobian of $g$ satisfies:
$$ dq d\sigma = \frac{\prod_{i=1}^{n}q_i}{\prod_{i=1}^{n-1}a_i}\cdot dadb. $$
\end{lem}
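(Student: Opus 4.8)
The plan is to reduce the broken-arrow case to the arrow case already handled in Lemma 1, using the squaring map $B \mapsto B^tB$. Observe that $B^tB$ is precisely an arrow matrix $A$ whose off-diagonal entries and diagonal entries are explicit functions of the $a_i$ and $b_i$: writing $A$ as in the statement of Lemma 1, one has $d_i = b_i^2$ for $i=1,\dots,n-1$, $c_i = a_i b_i$ for $i=1,\dots,n-1$, and $c_n = \sum_{i=1}^{n} a_i^2$. Moreover the eigenvalues of $A$ are the squared singular values, $\lambda_i = \sigma_i^2$, and the last row of the eigenvector matrix of $A = B^tB$ is exactly the last row $q$ of the right singular vector matrix of $B$. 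So $g$ factors as $(a,b) \to (c,d) \xrightarrow{f} (q,\lambda) \to (q,\sigma)$, and the Jacobian of $g$ is the product of the Jacobians of these three maps.

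First I would compute the Jacobian of the coordinate change $(a,b)\mapsto(c,d)$. Since $d_i = b_i^2$ depends only on $b_i$ and $c_i = a_ib_i$ depends only on $(a_i,b_i)$ for $i<n$, while $c_n=\sum a_i^2$ is the only quantity involving $a_n$, the Jacobian matrix is block triangular after a suitable ordering of variables. Its determinant works out to $\left(\prod_{i=1}^{n-1} 2b_i \cdot b_i\right)\cdot (2a_n) = 2^n a_n \prod_{i=1}^{n-1} b_i^2$; more precisely, $dc\,dd = \bigl|2^{n} a_n \prod_{i=1}^{n-1} b_i^2\bigr|\, da\, db$. Next, Lemma 1 gives $dq\,d\lambda = \frac{\prod_{i=1}^n q_i}{\prod_{i=1}^{n-1} c_i}\, dc\, dd$. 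Finally the map $\lambda_i = \sigma_i^2$ has Jacobian $d\lambda = \bigl(\prod_{i=1}^n 2\sigma_i\bigr)\, d\sigma$, so $dq\,d\lambda = \bigl(\prod 2\sigma_i\bigr)\, dq\, d\sigma$.

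Chaining these together and substituting $c_i = a_ib_i$, $\prod_{i=1}^{n-1} c_i = \prod_{i=1}^{n-1} a_i b_i$, I get
$$
\Bigl(\prod_{i=1}^n 2\sigma_i\Bigr) dq\,d\sigma
= \frac{\prod_{i=1}^n q_i}{\prod_{i=1}^{n-1} a_i b_i}\cdot 2^{n} a_n \prod_{i=1}^{n-1} b_i^2 \cdot da\, db
= 2^n a_n \frac{\prod_{i=1}^n q_i \cdot \prod_{i=1}^{n-1} b_i}{\prod_{i=1}^{n-1} a_i}\, da\, db.
$$
The remaining task is to show the factors of $2$, the $\sigma_i$, the $a_n$, and the $b_i$ all cancel to leave exactly $dq\,d\sigma = \frac{\prod_{i=1}^n q_i}{\prod_{i=1}^{n-1} a_i}\, da\, db$. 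This is where I expect the main obstacle: one needs the identity $\prod_{i=1}^n \sigma_i = a_n \prod_{i=1}^{n-1} b_i$ (up to sign), which follows from $\det(B) = a_n \prod_{i=1}^{n-1} b_i$ since $B$ is upper triangular, together with $|\det B| = \prod \sigma_i$; after this substitution $2^n a_n \prod b_i = 2^n \prod \sigma_i$ cancels the $\prod 2\sigma_i = 2^n \prod \sigma_i$ on the left, completing the proof. One should double-check the degenerate-sign conventions (all $q_i, \sigma_i, a_i, b_i$ positive) so that every Jacobian determinant is taken in absolute value consistently.
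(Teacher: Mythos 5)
Your proposal is correct and follows essentially the same route as the paper: pass to the arrow matrix $A=B^tB$ with $d_i=b_i^2$, $c_i=a_ib_i$, $c_n=\sum_i a_i^2$, apply Lemma 1, compute the Jacobian $dc\,dd = 2^n a_n\prod_{i=1}^{n-1}b_i^2\,da\,db$, and cancel via $d\lambda = 2^n\prod_i\sigma_i\,d\sigma$ together with $\prod_i\sigma_i^2=\det(B)^2=a_n^2\prod_{i=1}^{n-1}b_i^2$. The cancellation you flag as the ``remaining task'' is exactly the step the paper performs, so nothing is missing.
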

\noindent The proof is after Lemma 3.

\begin{lem}
If all elements of $a,b,q,\sigma$ are nonnegative, and $b,d,\lambda,\sigma$ are ordered, then $f$ and $g$ are bijections excepting sets of measure zero (if some $b_i = b_j$ or some $d_i = d_j$ for $i\neq j$).
\end{lem}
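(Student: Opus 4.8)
The plan is to establish that $f$ and $g$ are bijections (off a measure-zero set) by exhibiting their inverses explicitly via the secular-equation / interlacing structure, which is exactly the structure that makes the broken-arrow SVD computation work. I will focus on $f$ (the arrow case); $g$ follows by applying $f$ to $B^tB$, noting that $B^tB$ is an arrow matrix with $d_i = b_i^2$, $c_i = a_i b_i$ for $i<n$, $c_n = a_n^2$, and that the map $a,b \mapsto c,d$ and $\sigma \mapsto \lambda=\sigma^2$ are themselves bijections on the nonnegative ordered cells (the $a_i,b_i$ recover as $b_i=\sqrt{d_i}$, $a_i = c_i/\sqrt{d_i}$, $a_n=\sqrt{c_n}$, which needs $b_i>0$, handled below).

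First I would recall the forward direction: given $(c,d)$ with the $d_i$ distinct, the eigenvalues $\lambda$ of $A$ are the roots of the secular equation $c_n - \lambda - \sum_{i=1}^{n-1} c_i^2/(d_i-\lambda) = 0$, and standard arrow-matrix theory (Cauchy interlacing applied to the principal submatrix $\mathrm{diag}(d)$) gives strict interlacing $\lambda_1 > d_{(1)} > \lambda_2 > \cdots > d_{(n-1)} > \lambda_n$ when all $c_i \neq 0$; the corresponding $q_i$ are determined up to sign, and the positive-quadrant convention fixes them, with $\prod q_i \neq 0$. This shows $f$ is well-defined on the open set where the $d_i$ are distinct and the $c_i$ are nonzero, and that its image lands in the region where $\lambda$ strictly interlaces $d$ and $q$ is strictly positive. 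For the inverse, I would run this backwards: given $(q,\lambda)$ in that target region, the $d_i$ are forced to be the $n-1$ values strictly interlaced by the $\lambda$'s — but one must recover them, and the standard device is that $q_i^2 = \prod_{k}(\lambda_k - d_i) \big/ \prod_{j\neq i}(d_j - d_i)$ together with $\sum q_i^2 = 1$ pins down $d$ from $(q,\lambda)$; more cleanly, reconstruct the arrow matrix as $A = Q\Lambda Q^t$ restricted to the appropriate shape — since $A$ is orthogonally determined by its eigenpairs and its last row of eigenvectors $q$, and an arrow matrix is recovered from $(\lambda, q)$ by the formulas $c_i^2 = \big(\prod_k(\lambda_k - d_i)\big)/\big(\prod_{j\ne i}(d_j-d_i)\big)\cdot(\text{sign data})$... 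I will instead use the slicker route below.

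The cleanest argument I would actually write: the matrix $A$ is reconstructed from $(q,\lambda)$ directly. Indeed $d_i$ and $c_i$ are entries of $V\Lambda V^t$ where $V$ is an eigenvector matrix whose last row is $q$; the quantities $d_i = (V\Lambda V^t)_{ii}$, $c_n = (V\Lambda V^t)_{nn}$, and $c_i = (V\Lambda V^t)_{in}$ are \emph{independent of the choice of $V$ consistent with $(\lambda,q)$} precisely because the arrow structure forces all other off-diagonal entries to vanish — so one shows that the only $V$ with prescribed last row $q$ and prescribed column-scaling giving a matrix of arrow shape is essentially unique, which is again the interlacing/secular-equation rigidity. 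Concretely: from $(q,\lambda)$ set $d_i := \lambda_i + \lambda_n - \big(\sum_k \lambda_k q_k^2 \text{-type combination}\big)$ — rather than guess, I would derive $d$ as the unique solution of the $n-1$ equations $q_i^2 = \big(\prod_{k=1}^n (\lambda_k-d_i)\big)/\big(\prod_{j\neq i}(d_j-d_i)\big)$, which the secular equation guarantees has a unique strictly-interlacing solution, then $c_i^2 := -\prod_{k}(\lambda_k - d_i)\big/\prod_{j\neq i}(d_j-d_i)\cdot(\cdots)$ and $c_i \ge 0$; this defines $f^{-1}$. The measure-zero exceptional set is exactly $\{\,\exists i\ne j: d_i = d_j\,\}$ in the domain and its image $\{\exists i\ne j:\lambda_i=\lambda_j\}\cup\{\exists i: q_i=0\}$ — on these, interlacing degenerates (a $c_i$ or a $q_i$ vanishes, the matrix block-diagonalizes, and $f$ fails injectivity).

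The main obstacle is the rigidity step: showing that the interlacing data $(q,\lambda)$ determines $(c,d)$ \emph{uniquely} rather than just generically, i.e.\ that the $n-1$ equations for $d$ have exactly one ordered interlacing solution. This is where I would invoke the monotonicity of the secular function $w(\mu) = c_n - \mu - \sum c_i^2/(d_i - \mu)$ on each interval $(d_{(i)}, d_{(i-1)})$: strict monotonicity gives a single root per interval, and running the argument in reverse (treating the $d$'s as unknowns) gives the uniqueness of the reconstruction. Once that is in hand, continuity/openness of $f$ plus the explicit $f^{-1}$ formulas close the proof, and the broken-arrow case $g$ is immediate by composing with the squaring bijections as noted above.
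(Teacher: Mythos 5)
Your strategy (reconstruct $(c,d)$ from $(q,\lambda)$ via the arrow-matrix inverse eigenvalue problem) is a legitimate and genuinely different route from the paper, which instead exhibits a tridiagonal matrix similar to $A$ and composes the already-known tridiagonal bijections $(d,u)\leftrightarrow(\epsilon,\eta)$ and $(c_n,\gamma,\eta,\epsilon)\leftrightarrow(q,\lambda)$ of Dumitriu--Edelman/Parlett. However, as written your reconstruction has a genuine gap in exactly the step you flag as the main obstacle. First, the central formula is garbled: the relation you propose to solve for $d$, namely $q_i^2 = \prod_k(\lambda_k-d_i)\big/\prod_{j\neq i}(d_j-d_i)$, is (up to sign) the expression for $c_i^2$ --- compare Eq.~(2) of the paper --- not a formula for any component of $q$; the last row of the eigenvector matrix satisfies instead $q_k^2 = \prod_{j=1}^{n-1}(d_j-\lambda_k)\big/\prod_{i\neq k}(\lambda_i-\lambda_k)$, indexed by eigenvalues, so the system you write down is not the one that determines $d$. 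Second, your uniqueness ("rigidity") argument is circular: you invoke strict monotonicity of $w(\mu)=c_n-\mu-\sum_i c_i^2/(d_i-\mu)$, but this function is built from the unknowns $(c,d)$, so its monotonicity cannot establish that a given $(q,\lambda)$ admits only one preimage.

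The repair is standard and short: from $A=Q\Lambda Q^t$ one has $e_n^t(A-\mu I)^{-1}e_n=\sum_{k=1}^{n}q_k^2/(\lambda_k-\mu)$, while Cramer's rule gives $e_n^t(A-\mu I)^{-1}e_n=\prod_{j=1}^{n-1}(d_j-\mu)\big/\prod_{k=1}^{n}(\lambda_k-\mu)$. Hence the $d_j$ are precisely the roots of $\mu\mapsto\sum_k q_k^2/(\lambda_k-\mu)$, a function of the \emph{given} data that is strictly increasing on each interval $(\lambda_{k+1},\lambda_k)$ when all $q_k>0$, so it has exactly one root per gap; this pins down $d$ uniquely, after which $c_i^2$ is recovered from Eq.~(2) (with $c_i\geq 0$ fixing the sign) and $c_n=\sum_k\lambda_k q_k^2$. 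With that substitution your argument closes, and your reduction of $g$ to $f$ via $A=B^tB$ together with the squaring bijections on the nonnegative ordered cells is consistent with how the paper treats the broken-arrow case. Note what each approach buys: the paper's proof is shorter because it leans on the known tridiagonal inverse problem, whereas your (corrected) argument is self-contained and makes the interlacing/secular structure of arrow matrices explicit, which is also the structure used computationally elsewhere in the paper.
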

\begin{proof} We only prove it for $f$; the $g$ case is similar. We show that $f$ is a bijection using results from Dumitriu and Edelman\cite{Dumitriu2002}, who in turn cite Parlett\cite{Parlett1998}. Define the tridiagonal matrix
$$
\left[\begin{array}{ccccc}
\eta_1 & \epsilon_1 & 0 & 0 \\
\epsilon_1 & \eta_2 & \epsilon_2 & 0\\
 & \ddots & \ddots & \ddots  \\
0 & 0 & \epsilon_{n-1} & \eta_{n-1}
\end{array}
\right]
$$
to have eigenvalues $d_1,\ldots,d_{n-1}$ and bottom entries of the eigenvector matrix
$u = (c_1,\ldots,c_{n-1})/\gamma,$ where $\gamma = \sqrt{c_1^2+\cdots+c_{n-1}^2}$. Let the whole eigenvector matrix be $U$. $(d,u)\leftrightarrow(\epsilon,\eta)$ is a bijection\cite{Dumitriu2002},\cite{Parlett1998} excepting sets of measure $0$. Now we extend the above tridiagonal matrix further and use $\sim$ to indicate similar matrices:
$$
\left[\begin{array}{cccccc}
\eta_1 & \epsilon_1 & 0 & 0 & 0\\
\epsilon_1 & \eta_2 & \epsilon_2 & 0 & 0\\
 & \ddots & \ddots & \ddots & \\
0 & 0 & \epsilon_{n-1} & \eta_{n-1} & \gamma \\
0 & 0 & 0 & \gamma & c_n
\end{array}
\right] \sim
\left[\begin{array}{cccc}
d_1 &  &  & u_1\gamma \\
& \ddots & & \vdots \\
& & d_{n-1} & u_{n-1}\gamma \\
u_1\gamma & \cdots & u_{n-1}\gamma & c_n
\end{array}\right] = A
$$
$(c_1,\ldots,c_{n-1})\leftrightarrow (u,\gamma)$ is a bijection, as is $(c_n)\leftrightarrow (c_n)$, so we have constructed a bijection from $(c_1,\ldots,c_{n-1},c_n,d_1,\ldots,d_{n-1})\leftrightarrow(c_n,\gamma,\eta,\epsilon)$, excepting sets of measure $0$. $(c_n,\gamma,\eta,\epsilon)$ defines a tridiagonal matrix which is in bijection with $(q,\lambda)$\cite{Dumitriu2002},\cite{Parlett1998}. Hence we have bijected $(c,d)\leftrightarrow (q,\lambda)$. The proof that $f$ is a bijection is complete.
\end{proof}

\noindent\textit{Proof of Lemma 1.} By Dumitriu and Edelman\cite{Dumitriu2002}, Lemma 2.9,
$$ dqd\lambda =  \frac{\prod_{i=1}^{n}q_i}{\gamma\prod_{i=1}^{n-1}\epsilon_i}dc_n d\gamma d\epsilon d\eta. $$
Also by Dumitriu and Edelman\cite{Dumitriu2002}, Lemma 2.9,
$$ dddu = \frac{\prod_{i=1}^{n-1}u_i}{\prod_{i=1}^{n-1}\epsilon_i} d\epsilon d\eta. $$
Together,
$$ dq d\lambda = \frac{\prod_{i=1}^{n}q_i}{\gamma\prod_{i=1}^{n-1}u_i}dc_n dd du d\gamma $$
The full spherical element is, using $\gamma$ as the radius,
$$ dc_1\cdots dc_{n-1} = \gamma^{n-2} du d\gamma. $$
Hence,
$$ dqd\lambda = \frac{\prod_{i=1}^{n}q_i}{\gamma^{n-1}\prod_{i=1}^{n-1}u_i}dcdd, $$
which by substitution is
$$ dqd\lambda = \frac{\prod_{i=1}^{n}q_i}{\prod_{i=1}^{n-1}c_i}dcdd $$

\noindent\textit{Proof of Lemma 2.} Let $A = B^tB$. $d\lambda = 2^n\prod_{i=1}^{n}\sigma_i d\sigma$, and since $\prod_{i=1}^{n}{\sigma_i}^2 = \det(B^tB) = \det(B)^2 = a_n^2\prod_{i=1}^{n-1}b_i^2$, by Lemma 1,
$$ dqd\sigma = \frac{\prod_{i=1}^{n}q_i}{2^na_n\prod_{i=1}^{n-1}(b_ic_i)}dcdd. $$
The full-matrix Jacobian $\frac{\partial(c,d)}{\partial(a,b)}$ is
$$ \frac{\partial(c,d)}{\partial(a,b)} =
\left[\begin{array}{ccccccc}
b_1 & &        & 2a_1         & & & \\
 & \ddots &  & \vdots     & & & \\
& & b_{n-1} & 2a_{n-1}   & & & \\
 &  &            & 2a_n      & & & \\
 a_1  &      &       &         & 2b_1 & & \\
 & \ddots & &                & & \ddots & \\
 & & a_{n-1} &                 & & & 2b_{n-1} \\       
\end{array}
\right]
$$
The determinant gives $dcdd = 2^na_n\prod_{i=1}^{n-1}b_i^2 dadb$. So,
$$ dqd\sigma = \frac{\prod_{i=1}^{n}q_i\prod_{i=1}^{n-1}b_i}{\prod_{i=1}^{n-1}c_i}dadb = \frac{\prod_{i=1}^{n}q_i}{\prod_{i=1}^{n-1}a_i}dadb. $$

\section{Further Arrow and Broken-Arrow Matrix Lemmas}

\begin{lem}
$$ q_k = \left(1+\sum_{j=1}^{n-1}\frac{c_j^2}{(\lambda_k-d_j)^2}\right)^{-1/2}. $$
\end{lem}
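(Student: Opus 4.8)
The plan is to compute the eigenvectors of the arrow matrix $A$ directly from the eigenvalue equations and then simply read off their last coordinate. Fix an eigenvalue $\lambda_k$ and let $v=(v_1,\ldots,v_{n-1},v_n)^t$ be a (not yet normalized) eigenvector, $Av=\lambda_k v$. The first $n-1$ scalar equations read $d_j v_j + c_j v_n = \lambda_k v_j$ for $j=1,\ldots,n-1$, which solve to $v_j = c_j v_n/(\lambda_k-d_j)$, provided $\lambda_k\neq d_j$ for every $j$. This genericity is exactly the situation left after excluding the measure-zero set of Lemma 3; and in fact, when all $c_j\neq 0$ and the $d_j$ are distinct, no eigenvalue of $A$ can equal a $d_j$, since the secular function $\lambda - c_n - \sum_j c_j^2/(\lambda-d_j)$ has a pole there.

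Next I would normalize. Choosing $v_n>0$ (consistent with the convention that $q$ lies in the positive quadrant), we get
$$ \|v\|^2 = v_n^2 + \sum_{j=1}^{n-1} v_j^2 = v_n^2\left(1 + \sum_{j=1}^{n-1}\frac{c_j^2}{(\lambda_k-d_j)^2}\right), $$
so the last entry of the corresponding unit eigenvector is
$$ q_k = \frac{v_n}{\|v\|} = \left(1 + \sum_{j=1}^{n-1}\frac{c_j^2}{(\lambda_k-d_j)^2}\right)^{-1/2}, $$
which is the claimed identity. Note that the last row of the eigenvalue equation, $\sum_j c_j v_j + c_n v_n = \lambda_k v_n$, is not needed for this computation: after substituting $v_j=c_j v_n/(\lambda_k-d_j)$ it only reproduces the secular equation $\lambda_k = c_n + \sum_j c_j^2/(\lambda_k-d_j)$ that characterizes the $\lambda_k$, and cancels out.

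There is essentially no obstacle here. The only point requiring a word of care is the degenerate case $\lambda_k=d_j$, which is either excluded by the genericity hypotheses already in place for Lemma 3 or, in the arrow setting with nonzero off-diagonal entries $c_j$, simply does not occur; everything else is the one-line substitution above.
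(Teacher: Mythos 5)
Your proof is correct and is essentially the paper's own argument: solve the first $n-1$ rows of $Av=\lambda_k v$ to get $v_j = c_j v_n/(\lambda_k-d_j)$, then normalize and read off the last entry. The extra remarks on the degenerate case $\lambda_k = d_j$ and the secular equation are fine but not needed beyond what the paper already assumes.
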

\begin{proof}
Let $v$ be the eigenvector of $A$ corresponding to $\lambda_k$. Temporarily fix $v_n = 1$. Using $Av = \lambda v$, for $j < n$, $v_j = c_j/(\lambda_k - d_j)$. Renormalizing $v$ so that $\|v\| = 1$, we get the desired value for $v_n = q_k$.
\end{proof}

\begin{lem}
For a vector $x$ of length $l$, define $\Delta(x) = \prod_{i<j}|x_i-x_j|$. Then,
$$ \Delta(\lambda) = \Delta(d)\prod_{k=1}^{n-1}|c_k|\prod_{k=1}^{n}q_k^{-1}. $$
\end{lem}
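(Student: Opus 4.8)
\emph{Proof plan.} The idea is to read both sides of the identity off the secular (characteristic) polynomial of the arrow matrix $A$ and its derivative, using Lemma~5 to control the bottom eigenvector components $q_k$. Set $p(\lambda) := \det(\lambda I - A)$. A Schur–complement expansion on the arrow structure gives
$$ p(\lambda) = \Big(\prod_{j=1}^{n-1}(\lambda-d_j)\Big)\Big(\lambda - c_n - \sum_{j=1}^{n-1}\frac{c_j^2}{\lambda-d_j}\Big), $$
a monic polynomial of degree $n$, hence also $p(\lambda)=\prod_{k=1}^{n}(\lambda-\lambda_k)$. The plan has three short steps.

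First I would evaluate $p$ at $\lambda = d_j$: every summand except the $j$-th dies, so $p(d_j) = -c_j^2\prod_{i\ne j}(d_j-d_i)$. Equating this with $\prod_{k=1}^{n}(d_j-\lambda_k)$, taking absolute values, and then multiplying over $j=1,\dots,n-1$ yields
$$ \prod_{j=1}^{n-1}\prod_{k=1}^{n}|\lambda_k-d_j| \;=\; \Big(\prod_{j=1}^{n-1}c_j^2\Big)\,\Delta(d)^2, $$
where I use $\prod_{j}\prod_{i\ne j}|d_j-d_i| = \Delta(d)^2$ for the length-$(n-1)$ vector $d$.

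Next I would differentiate. Writing $p = g\cdot f$ with $g(\lambda)=\prod_{j=1}^{n-1}(\lambda-d_j)$ and $f(\lambda)=\lambda - c_n - \sum_{j=1}^{n-1}c_j^2/(\lambda-d_j)$, and using $f(\lambda_k)=0$, the product rule collapses to $p'(\lambda_k)=g(\lambda_k)f'(\lambda_k)$. Since $f'(\lambda)=1+\sum_{j=1}^{n-1}c_j^2/(\lambda-d_j)^2$, Lemma~5 identifies $f'(\lambda_k)=q_k^{-2}$, so $p'(\lambda_k)=q_k^{-2}\prod_{j=1}^{n-1}(\lambda_k-d_j)$. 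On the other hand $p'(\lambda_k)=\prod_{i\ne k}(\lambda_k-\lambda_i)$. Multiplying over $k=1,\dots,n$, taking absolute values, using $\prod_{k}\prod_{i\ne k}|\lambda_k-\lambda_i|=\Delta(\lambda)^2$, and substituting the identity from the previous step, I get
$$ \Delta(\lambda)^2 \;=\; \Big(\prod_{k=1}^{n}q_k^{-2}\Big)\Big(\prod_{j=1}^{n-1}c_j^2\Big)\Delta(d)^2. $$
Finally, every factor is nonnegative, so taking square roots gives $\Delta(\lambda)=\Delta(d)\prod_{k=1}^{n-1}|c_k|\prod_{k=1}^{n}q_k^{-1}$.

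I do not expect a genuine obstacle here: once the secular equation is in hand the argument is essentially mechanical. The only substantive input is the identification $f'(\lambda_k)=q_k^{-2}$, which is exactly Lemma~5 recast and is what ties the eigenvector components to the interlacing data; the only thing to watch is sign bookkeeping when equating $p(d_j)$ and $p'(\lambda_k)$ with their factored forms, and that is harmless because the statement is phrased entirely with absolute values. (It is also worth noting the structural parallel with the tridiagonal determinant identity of Dumitriu and Edelman~\cite{Dumitriu2002} used in Lemma~3, of which this is the arrow-matrix analogue.)
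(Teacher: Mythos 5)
Your proposal is correct and follows essentially the same route as the paper's proof: factor the characteristic polynomial over the arrow structure, evaluate it at the $d_j$'s and its derivative at the $\lambda_k$'s, multiply the resulting identities, invoke the formula $q_k=\bigl(1+\sum_j c_j^2/(\lambda_k-d_j)^2\bigr)^{-1/2}$ (which is Lemma~4 in the paper, not Lemma~5), and take square roots. The only differences are cosmetic (monic vs.\ the paper's sign convention for $p$, and your explicit $p'=gf'$ collapse, which is the paper's equation~(3) in disguise).
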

\begin{proof}
Using a result in Wilkinson\cite{Wilkinson1999}, the characteristic polynomial of $A$ is:
\begin{equation}
p(\lambda) = \prod_{i=1}^{n}(\lambda_i-\lambda) = \prod_{i=1}^{n-1}(d_i-\lambda)\left(c_n-\lambda-\sum_{j=1}^{n-1}\frac{c_j^2}{d_j-\lambda}\right)
\end{equation}
Therefore, for $k < n$,
\begin{equation}
p(d_k) = \prod_{i=1}^{n}(\lambda_i-d_k) = -c_k^2\prod_{i=1,i\neq k}^{n-1}(d_i-d_k). 
\end{equation}
Taking a product on both sides,
$$ \prod_{i=1}^{n}\prod_{k=1}^{n-1}(\lambda_i-d_k) = (-1)^{n-1}\Delta(d)^2\prod_{k=1}^{n-1}c_k^2. $$
Also,
\begin{equation} p'(\lambda_k) = -\prod_{i=1,i\neq k}^{n}(\lambda_i-\lambda_k) = -\prod_{i=1}^{n-1}(d_i-\lambda_k)\left(1+\sum_{j=1}^{n-1}\frac{c_j^2}{(d_j-\lambda_k)^2}\right). \end{equation}
Taking a product on both sides,
$$ \prod_{i=1}^{n}\prod_{k=1}^{n-1}(\lambda_i-d_k) = (-1)^{n-1}\Delta(\lambda)^2\prod_{i=1}^{n}\left(1+\sum_{j=1}^{n-1}\frac{c_j^2}{(d_j-\lambda_i)^2}\right)^{-1}. $$
Equating expressions equal to $\prod_{i=1}^{n}\prod_{k=1}^{n-1}(\lambda_i-d_k)$, we get
$$ \Delta(d)^2\prod_{k=1}^{n-1}c_k^2 = \Delta(\lambda)^2\prod_{i=1}^{n}\left(1+\sum_{j=1}^{n-1}\frac{c_j^2}{(d_j-\lambda_i)^2}\right)^{-1}. $$
The desired result follows by the previous lemma.
\end{proof}

\begin{lem}
For a vector $x$ of length $l$, define $\Delta^2(x) = \prod_{i<j}|x_i^2-x_j^2|$. The singular values of $B$ satisfy
$$ \Delta^2(\sigma) = \Delta^2(b)\prod_{k=1}^{n-1}|a_k b_k|\prod_{k=1}^{n}q_k^{-1}. $$
\end{lem}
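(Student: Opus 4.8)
\textit{Proof plan.} The strategy is exactly parallel to the derivation of Lemma 2 from Lemma 1: pass to $A = B^tB$ and invoke the already-proved arrow-matrix identity (Lemma 5). First I would record that $A = B^tB$ is a symmetric arrow matrix of the form treated in Section 3. Computing column inner products of $B$, its columns are $b_ie_i$ for $i<n$ and $\sum_{k=1}^n a_ke_k$ in position $n$, so
$$ A = B^tB = \left[\begin{array}{cccc} b_1^2 & & & a_1b_1 \\ & \ddots & & \vdots \\ & & b_{n-1}^2 & a_{n-1}b_{n-1} \\ a_1b_1 & \cdots & a_{n-1}b_{n-1} & \sum_{k=1}^n a_k^2 \end{array}\right], $$
i.e. an arrow matrix with $d_i = b_i^2$ and $c_i = a_ib_i$ for $i=1,\ldots,n-1$ (the corner entry $c_n$ plays no role below). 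Its eigenvalues are $\lambda_i = \sigma_i^2$, and, writing $B = U\Sigma V^t$ for the SVD, $A = V\Sigma^2V^t$, so the eigenvector matrix of $A$ is $V$; hence the bottom row $q$ of $V$ (the quantity appearing in the statement) is precisely the $q$ attached to the arrow matrix $A$, with the same positive-quadrant convention. This identification is the only place any care is needed.

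Next I would apply Lemma 5 to $A$. Lemma 5 gives $\Delta(\lambda) = \Delta(d)\prod_{k=1}^{n-1}|c_k|\prod_{k=1}^n q_k^{-1}$, where $\Delta(\lambda) = \prod_{i<j}|\lambda_i-\lambda_j|$ runs over the $n$ eigenvalues and $\Delta(d) = \prod_{i<j}|d_i-d_j|$ runs over the $n-1$ diagonal entries. Substituting $\lambda_i = \sigma_i^2$ turns the left side into $\prod_{i<j}|\sigma_i^2-\sigma_j^2| = \Delta^2(\sigma)$; substituting $d_i = b_i^2$ turns $\Delta(d)$ into $\prod_{i<j}|b_i^2-b_j^2| = \Delta^2(b)$ (a Vandermonde in the $n-1$ variables $b_1,\ldots,b_{n-1}$, matching the convention of the statement); and $\prod_{k=1}^{n-1}|c_k| = \prod_{k=1}^{n-1}|a_kb_k|$. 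Combining,
$$ \Delta^2(\sigma) = \Delta^2(b)\prod_{k=1}^{n-1}|a_kb_k|\prod_{k=1}^{n}q_k^{-1}, $$
which is the claim.

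There is essentially no obstacle of substance: the content is entirely in Lemma 5, and what remains is the bookkeeping above — checking that $B^tB$ has the stated arrow form, that its eigenvector data $q$ agrees with the right-singular-vector data of $B$, and that the two Vandermondes $\Delta(\lambda)$, $\Delta(d)$ become $\Delta^2(\sigma)$, $\Delta^2(b)$ under the substitution $\lambda_i = \sigma_i^2$, $d_i = b_i^2$. (One should also note that the genericity hypotheses needed for Lemma 5 — distinct $d_i$, hence distinct $b_i^2$ — hold off a set of measure zero, which is all that is required for the later Jacobian and density computations.) If one wanted an entirely self-contained argument avoiding Lemma 5, the alternative would be to redo its proof directly for $B$ using the secular equation $a_n^2 - \sigma^2 + \sum_{j=1}^{n-1} a_j^2 b_j^2/(b_j^2-\sigma^2)$ for the singular values, but going through $A = B^tB$ is cleaner and reuses work already done.
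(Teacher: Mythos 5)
Your proposal is correct and is exactly the paper's argument: the paper's proof of this lemma is the one-line remark that it ``follows from $A = B^tB$,'' and your write-up simply fills in that computation (identifying $d_i = b_i^2$, $c_i = a_ib_i$, $\lambda_i = \sigma_i^2$, and the shared $q$, then applying Lemma 5). The identification of $q$ with the bottom row of the right singular vector matrix and the measure-zero caveat are exactly the bookkeeping the paper leaves implicit.
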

\begin{proof}
Follows from $A = B^tB.$
\end{proof}

\section{Jack and Hermite Polynomials}

As in \cite{Dumitriu2007}, if $\kappa\vdash k$, $\kappa = (\kappa_1,\kappa_2,\ldots)$ is nonnegative, ordered non-increasingly, and it sums to $k$. Let $\alpha = 2/\beta$. Let $\rho_\kappa^{\alpha} = \sum_{i=1}^{l}\kappa_i(\kappa_i-1-(2/\alpha)(i-1))$.
We define $l(\kappa)$ to be the number of nonzero elements of $\kappa$. We say that $\mu \leq \kappa$ in ``lexicographic ordering'' if for the largest integer $j$ such that $\mu_i = \kappa_i$ for all $i < j$, we have $\mu_j \leq \kappa_j$.

\begin{definition}
As in Dumitriu, Edelman and Shuman\cite{Dumitriu2007}, we define the Jack polynomial of a matrix argument, $C^{(\beta)}_\kappa(X)$, as follows: Let $x_1,\ldots,x_n$ be the eigenvalues of $X$. $C^{(\beta)}_\kappa(X)$ is the only homogeneous polynomial eigenfunction of the Laplace-Beltrami-type operator:
$$ D^*_n = \sum_{i=1}^{n}x_i^2\frac{\partial^2}{\partial x_i^2} + \beta\cdot\sum_{1\leq i\neq j \leq n}\frac{x_i^2}{x_i-x_j}\cdot\frac{\partial}{\partial x_i}, $$
with eigenvalue $\rho_k^{\alpha}+k(n-1),$ having highest order monomial basis function in lexicographic ordering (see \cite{Dumitriu2007}, Section 2.4) corresponding to $\kappa$. In addition,
$$ \sum_{\kappa\vdash k,l(\kappa)\leq n} C_{\kappa}^{(\beta)}(X) = {\rm trace}(X)^k.$$
\end{definition}

\begin{lem}
If we write $C^{(\beta)}_\kappa(X)$ in terms of the eigenvalues $x_1,\ldots,x_n$, as $C^{(\beta)}_\kappa(x_1,\ldots,x_n)$, then $C^{(\beta)}_\kappa(x_1,\ldots,x_{n-1}) = C^{(\beta)}_\kappa(x_1,\ldots,x_{n-1},0)$ if $l(\kappa) < n$. If $l(\kappa) = n$, $C^{(\beta)}_\kappa(x_1,\ldots,x_{n-1},0) = 0$.
\end{lem}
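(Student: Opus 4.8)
The plan is to reduce the statement to the monomial-symmetric-function expansion of $C_\kappa^{(\beta)}$ and the elementary behaviour of monomial symmetric functions $m_\mu$ under setting one variable to zero. I will use three standard facts about Jack polynomials (see \cite{Dumitriu2007} and \cite{Stanley1989}): (i) $C_\kappa^{(\beta)}=\sum_\mu c_{\kappa\mu}^{(\beta)}m_\mu$ with $c_{\kappa\mu}^{(\beta)}=0$ unless $\mu\trianglelefteq\kappa$ in the dominance order (that is, $|\mu|=|\kappa|$ and $\mu_1+\cdots+\mu_j\le\kappa_1+\cdots+\kappa_j$ for every $j$), $c_{\kappa\kappa}^{(\beta)}\neq 0$, and the coefficients $c_{\kappa\mu}^{(\beta)}$ do not depend on the number of variables; (ii) $m_\mu(x_1,\dots,x_{n-1},0)=m_\mu(x_1,\dots,x_{n-1})$ when $l(\mu)\le n-1$, while $m_\mu(x_1,\dots,x_{n-1},0)=0$ when $l(\mu)=n$, since in the latter case every monomial of $m_\mu$ uses all $n$ variables; (iii) $\mu\trianglelefteq\kappa$ implies $l(\mu)\ge l(\kappa)$, for which I would give a one-line argument: with $m=l(\mu)$, the dominance inequality at $j=m$ yields $\kappa_1+\cdots+\kappa_m\ge\mu_1+\cdots+\mu_m=|\mu|=|\kappa|$, which forces $\kappa_{m+1}=\kappa_{m+2}=\cdots=0$.

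Suppose first $l(\kappa)<n$. In $n$ variables, $C_\kappa^{(\beta)}(x_1,\dots,x_n)=\sum_{\mu\,\trianglelefteq\,\kappa,\ l(\mu)\le n}c_{\kappa\mu}^{(\beta)}m_\mu(x_1,\dots,x_n)$, the restriction $l(\mu)\le n$ being automatic because $m_\mu$ vanishes identically in $n$ variables otherwise. Setting $x_n=0$ and applying (ii) termwise deletes exactly the summands with $l(\mu)=n$ and turns each remaining $m_\mu$ into the corresponding monomial function in $x_1,\dots,x_{n-1}$, giving $C_\kappa^{(\beta)}(x_1,\dots,x_{n-1},0)=\sum_{\mu\,\trianglelefteq\,\kappa,\ l(\mu)\le n-1}c_{\kappa\mu}^{(\beta)}m_\mu(x_1,\dots,x_{n-1})$. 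Since $l(\kappa)\le n-1$, the Jack polynomial in $n-1$ variables is non-degenerate and, by the $n$-independence in (i), equals $\sum_{\mu\,\trianglelefteq\,\kappa}c_{\kappa\mu}^{(\beta)}m_\mu(x_1,\dots,x_{n-1})$, where the terms with $l(\mu)>n-1$ vanish automatically. The two sums coincide, which is the claim.

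Suppose now $l(\kappa)=n$. Every $\mu$ with $c_{\kappa\mu}^{(\beta)}\neq 0$ has $\mu\trianglelefteq\kappa$ and hence $l(\mu)\ge l(\kappa)=n$ by (iii); since $m_\mu$ vanishes identically in $n$ variables when $l(\mu)>n$, the only surviving terms of $C_\kappa^{(\beta)}(x_1,\dots,x_n)$ have $l(\mu)=n$, and each such term is annihilated by $x_n=0$ via (ii). Hence $C_\kappa^{(\beta)}(x_1,\dots,x_{n-1},0)=0$.

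The only non-formal ingredient is fact (i), and the point I expect to require the most care is combining the $n$-independence of the monomial coefficients with triangularity with respect to the dominance order (as opposed to the merely lexicographic triangularity recorded in the definition above). This is classical: it follows from the fact that the Laplace--Beltrami operator $D^*_n$ acts triangularly on the monomial basis with respect to the dominance order with $n$-independent off-diagonal structure. If one wished to avoid quoting it, Part 1 can be obtained directly from $D^*_n$: a short computation, using that $x_i^2/(x_i-x_n)$ equals $x_i$ at $x_n=0$ and Euler's identity for homogeneous functions, shows that $q:=C_\kappa^{(\beta)}(x_1,\dots,x_{n-1},0)$ satisfies $(D^*_nC_\kappa^{(\beta)})|_{x_n=0}=D^*_{n-1}q+\beta|\kappa|\,q$, so $q$ is an eigenfunction of $D^*_{n-1}$ with exactly the eigenvalue attached to $C_\kappa^{(\beta)}$ in $n-1$ variables; when $l(\kappa)\le n-1$ the monomial $x_1^{\kappa_1}\cdots x_{n-1}^{\kappa_{n-1}}$ survives the restriction with its nonzero coefficient, so $q$ has the correct leading monomial, and the uniqueness clause of the definition then forces $q=C_\kappa^{(\beta)}(x_1,\dots,x_{n-1})$. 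Part 2, on the other hand, genuinely relies on the dominance argument above, since the bare Laplace--Beltrami eigenvalue need not separate $\kappa$ from shorter partitions at special values of $\beta$.
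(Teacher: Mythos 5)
Your argument is correct, but it proceeds along a genuinely different route from the paper. The paper disposes of both cases by citation: for $l(\kappa)=n$ it invokes Stanley's Propositions 5.1 and 5.5, which give $C_\kappa^{(\beta)}(X)\propto\det(X)\,C_{(\kappa_1-1,\ldots,\kappa_n-1)}^{(\beta)}(X)$ when $\kappa_n>0$, so the vanishing at $x_n=0$ is immediate from the determinant factor; for $l(\kappa)<n$ it quotes the stability identity directly from Koev's notes (formula (3.8)). You instead derive both halves from a single classical fact — the dominance-triangular expansion $C_\kappa^{(\beta)}=\sum_{\mu\trianglelefteq\kappa}c_{\kappa\mu}^{(\beta)}m_\mu$ with coefficients independent of the number of variables — combined with the elementary behaviour of $m_\mu$ at $x_n=0$ and the neat observation that $\mu\trianglelefteq\kappa$ forces $l(\mu)\ge l(\kappa)$ (your one-line argument for this is right). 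What your route buys is uniformity and self-containedness: one structural fact yields both the stability and the vanishing, with the combinatorial mechanism visible, whereas the paper's route is shorter but rests on two separate external results; what the paper's route buys is that the $l(\kappa)=n$ case follows from an explicit factorization rather than from triangularity, which is the form it reuses later (Corollary 2). Two small caveats on your optional Laplace--Beltrami sketch: the computation $(D_n^*C_\kappa^{(\beta)})|_{x_n=0}=D_{n-1}^*q+\beta|\kappa|q$ is correct, but it matches the $(n-1)$-variable eigenvalue only with the eigenvalue of $D_n^*$ in the form $\rho_\kappa^\alpha+\beta k(n-1)$ (the constant as printed in Definition 1 appears to carry $k(n-1)$ without the $\beta$), and the uniqueness clause of Definition 1 pins down $q$ only up to a scalar, so to conclude equality rather than proportionality you still need either the $n$-independence of the leading coefficient or the normalization $\sum_{\kappa\vdash k}C_\kappa^{(\beta)}={\rm trace}(\cdot)^k$ restricted to $x_n=0$; your main argument is unaffected by either point.
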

\begin{proof}
The $l(\kappa) = n$ case follows from a formula in Stanley\cite{Stanley1989}, Propositions 5.1 and 5.5 that only applies if $\kappa_n > 0$,
$$ C_\kappa^{(\beta)}(X) \propto \det(X)C_{\kappa_1-1,\ldots,\kappa_n-1}^{(\beta)}(X). $$
If $\kappa_n = 0$, from Koev\cite[(3.8)]{Koev2006}, $C^{(\beta)}_\kappa(x_1,\ldots,x_{n-1}) = C^{(\beta)}_\kappa(x_1,\ldots,x_{n-1},0)$.
\end{proof}

\begin{definition}
The Hermite Polynomials (of a matrix argument) are a basis for the space of symmetric multivariate polynomials over eigenvalues $x_1,\ldots,x_n$ of $X$ which are related to the Jack polynomials by (Dumitriu, Edelman, and Shuman\cite{Dumitriu2007}, page 17)
$$ H_\kappa^{(\beta)}(X) = \sum_{\sigma\subseteq\kappa}c^{(\beta)}_{\kappa,\sigma}\cdot\frac{C_{\sigma}^{(\beta)}(X)}{C_{\sigma}^{(\beta)}(I_n)}, $$
where $\sigma\subseteq\kappa$ means for each $i$, $\sigma_i\leq\kappa_i$, and the coefficicents $c_{\kappa,\sigma}^{(\beta)}$ are given by (Dumitriu, Edelman, and Shuman\cite{Dumitriu2007}, page 17). Since Jack polynomials are homogeneous, that means 
$$ H_\kappa^{(\beta)}(X) \propto C_\kappa^{(\beta)}(X) + L.O.T.  $$
Furthermore, by (Dumitriu, Edelman, and Shuman\cite{Dumitriu2007}, page 16), the Hermite Polynomials are orthogonal with respect to the measure
$$ \exp\left(-\frac{1}{2}\sum_{i=1}^{n}x_i^2\right)\prod_{i\neq j}|x_i-x_j|^{\beta}. $$
\end{definition}

\begin{lem}
Let
$$
A(\mu,c) = \left[\begin{array}{cccc}
\mu_1 &              &               & c_1 \\
       & \ddots   &               & \vdots \\
       &              & \mu_{n-1} & c_{n-1} \\
c_1 & \cdots   & c_{n-1} & c_n
\end{array}\right]
=
\left[\begin{array}{cccc}
 &              &               & c_1 \\
       & M   &               & \vdots \\
       &              &   & c_{n-1} \\
c_1 & \cdots   & c_{n-1} & c_n
\end{array}\right],
 $$
and let for $l(\kappa) < n$,
$$ Q(\mu,c_n) = \int\prod_{i=1}^{n-1}c_i^{\beta-1} H_{\kappa}^{(\beta)}(A(\mu,c)) \exp(-c_1^2-\cdots-c_{n-1}^2)dc_1\cdots dc_{n-1}. $$
$Q$ is a symmetric polynomial in $\mu$ with leading term proportional to $H_\kappa^{(\beta)}(M)$ plus terms of order strictly less than $|\kappa|$.
\end{lem}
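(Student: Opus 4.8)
The plan is to expand $H_\kappa^{(\beta)}(A(\mu,c))$ as an honest polynomial in the matrix entries, integrate the $c_i$'s out termwise, and then read off the homogeneity in $\mu$. First I would note that $H_\kappa^{(\beta)}$ evaluated at any symmetric matrix is a symmetric polynomial in the eigenvalues, hence a polynomial in the power sums ${\rm tr}(A^j)$, hence a polynomial in the entries; thus $H_\kappa^{(\beta)}(A(\mu,c))$ is a polynomial in $\mu_1,\ldots,\mu_{n-1},c_1,\ldots,c_n$. Multiplying by $\prod_{i=1}^{n-1}c_i^{\beta-1}\exp(-c_1^2-\cdots-c_{n-1}^2)$, the $c_i$-integral converges (the weight is integrable near $0$ since $\beta>0$, and the Gaussian factor controls the polynomial at infinity), and it turns each monomial in the $c_i$'s into a finite moment; so $Q(\mu,c_n)$ is a polynomial in $\mu$ and $c_n$.

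For symmetry in $\mu$, I would conjugate $A(\mu,c)$ by a permutation matrix acting on the first $n-1$ coordinates: this permutes $(\mu_1,\ldots,\mu_{n-1})$ and $(c_1,\ldots,c_{n-1})$ together and does not change the eigenvalues, hence not $H_\kappa^{(\beta)}$; since the weight and the domain of the $c_i$-integral are symmetric in $c_1,\ldots,c_{n-1}$, relabeling the dummy variables shows $Q$ is invariant under that permutation of $\mu$. So $Q$ is symmetric in $\mu$.

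For the leading term I would introduce the grading assigning degree $1$ to every $\mu_i$ and every $c_i$ ($1\le i\le n$). Under it the eigenvalues of $A(\mu,c)$ are homogeneous of degree $1$, so $C_\sigma^{(\beta)}(A(\mu,c))$ is homogeneous of degree $|\sigma|$, and by Definition 8 one has $H_\kappa^{(\beta)}(A(\mu,c)) = \gamma\,C_\kappa^{(\beta)}(A(\mu,c))$ plus terms of degree strictly below $|\kappa|$, with $\gamma\neq 0$. Write the top-degree piece as $C_\kappa^{(\beta)}(A(\mu,c)) = \sum_{a,b}C_{a,b}\,\mu^a c_1^{b_1}\cdots c_n^{b_n}$ with $|a|+|b|=|\kappa|$. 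Integrating in $c_1,\ldots,c_{n-1}$, such a monomial produces a term of $\mu$-degree $|a| = |\kappa|-|b|$, so the $\mu$-degree reaches $|\kappa|$ only when $b=0$. But the $b=0$ part of $C_\kappa^{(\beta)}(A(\mu,c))$ is exactly its restriction to $c_1=\cdots=c_n=0$, i.e.\ $C_\kappa^{(\beta)}$ at the diagonal matrix with entries $\mu_1,\ldots,\mu_{n-1},0$, which by Lemma 6 (here is where $l(\kappa)<n$ enters) equals $C_\kappa^{(\beta)}(\mu_1,\ldots,\mu_{n-1}) = C_\kappa^{(\beta)}(M)$. Integrating this $c$-independent term against the weight just multiplies it by the positive constant $\prod_{i=1}^{n-1}\int c_i^{\beta-1}\exp(-c_i^2)\,dc_i$. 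Hence the part of $Q$ of $\mu$-degree $|\kappa|$ is a nonzero multiple of $C_\kappa^{(\beta)}(M)$, which is the leading term of $H_\kappa^{(\beta)}(M)$, and all remaining terms of $Q$ have $\mu$-degree $<|\kappa|$.

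I do not anticipate a real obstacle; the one point that needs care is the bookkeeping in the last step -- that passing to the top $\mu$-degree is the same as the $c\to 0$ restriction of the homogeneous polynomial $C_\kappa^{(\beta)}(A(\mu,c))$, after which Lemma 6 collapses the arrow matrix to the diagonal matrix $M$. One should also observe that the conclusion is insensitive to whether the $c_i$ range over the positive orthant or over all of $\mathbb{R}$ with weight $|c_i|^{\beta-1}$, since only the (positive) zeroth moments enter the leading term.
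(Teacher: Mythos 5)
Your proposal is correct and follows essentially the same route as the paper's proof: polynomiality via the power-sum expansion of $H_\kappa^{(\beta)}(A(\mu,c))$, symmetry via simultaneously permuting the $\mu_i$'s and $c_i$'s, and identification of the top $\mu$-degree part as the $c\to 0$ restriction, which collapses to $C_\kappa^{(\beta)}(M)$ by the vanishing/stability property of Jack polynomials when $l(\kappa)<n$. The only nits are bookkeeping: the facts you cite are the paper's Definition 2 (Hermite $=$ Jack $+$ L.O.T.) and Lemma 7, not ``Definition 8'' and ``Lemma 6.''
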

\begin{proof}
If we exchange two $c_i$'s, $i<n$, and the corresponding $\mu_i$'s, $A(\mu,c)$ has the same eigenvalues, so $H_\kappa^{(\beta)}(A(\mu,c))$ is unchanged. So, we can prove $Q(\mu,c_n)$ is symmetric in $\mu$ by swapping two $\mu_i$'s, and seeing that the integral is invariant over swapping the corresponding $c_i$'s.

Now since $H_\kappa^{(\beta)}(A(\mu,c))$ is a symmetric polynomial in the eigenvalues of $A(\mu,c)$, we can write it in the power-sum basis, i.e. it is in the ring generated by $t_p = \lambda_1^p + \cdots + \lambda_n^p$, for $p = 0, 1, 2, 3, \ldots$, if $\lambda_1,\ldots,\lambda_n$ are the eigenvalues of $A(\mu,c)$. But $t_p = {\rm trace}(A(\mu,c)^p)$, so it is a polynomial in $\mu$ and $c$,
$$ H_\kappa^{(\beta)}(A(\mu,c)) = \sum_{i\geq 0}\sum_{\epsilon_1,\ldots,\epsilon_{n-1}\geq 0}p_{i,\epsilon}(\mu)c_{n}^ic_1^{\epsilon_1}\cdots c_{n-1}^{\epsilon_{n-1}}. $$
Its order in $\mu$ and $c$ must be $|\kappa|$, the same as its order in $\lambda$. Integrating, it follows that
$$ Q(\mu,c_n) = \sum_{i\geq 0}\sum_{\epsilon_1,\ldots,\epsilon_{n-1}\geq 0}p_{i,\epsilon}(\mu)c_{n}^iM_{\epsilon}, $$
for constants $M_{\epsilon}$. Since $\deg(H_\kappa^{(\beta)}(A(\mu,c))) = |\kappa|$, $\deg(p_{i,\epsilon}(\mu)) \leq |\kappa|-|\epsilon|-i$. Writing
$$ Q(\mu,c_n) = M_{\vec{0}}p_{0,\vec{0}}(\mu) + \sum_{(i,\epsilon)\neq(0,\vec{0})}p_{i,\epsilon}(\mu)c_n^{i}M_\epsilon, $$
we see that the summation has degree at most $|\kappa|-1$ in $\mu$ only, treating $c_n$ as a constant. Now
$$ p_{0,\vec{0}}(\mu) = H_\kappa^{(\beta)}\left(\left[\begin{array}{cc}  
M & \vec{0} \\
\vec{0} & 0
\end{array}\right]\right) = H_{\kappa}^{(\beta)}(\mu) + r(\mu), $$
where $r(\mu)$ has degree at most $|\kappa|-1$. This follows from the expansion of $H_{\kappa}^{(\beta)}$ in Jack polynomials in Definition 2 and the fact about Jack polynomials in Lemma 7. The new lemma follows.
\end{proof}

\begin{lem}
Let the arrow matrix below have eigenvalues in $\Lambda = diag(\lambda_1,\ldots,\lambda_n)$ and have $q$ be the last row of its eigenvector matrix, i.e. $q$ contains the $n$-th element of each eigenvector,
$$
A(\Lambda,q) = \left[\begin{array}{cccc}
\mu_1 &              &               & c_1 \\
       & \ddots   &               & \vdots \\
       &              & \mu_{n-1} & c_{n-1} \\
c_1 & \cdots   & c_{n-1} & c_n
\end{array}\right]
=
\left[\begin{array}{cccc}
 &              &               & c_1 \\
       & M   &               & \vdots \\
       &              &   & c_{n-1} \\
c_1 & \cdots   & c_{n-1} & c_n
\end{array}\right],
$$
By Lemma 3 this is a well-defined map except on a set of measure zero. Then, for $U(X)$ a symmetric homogeneous polynomial of degree $k$ in the eigenvalues of $X$,
$$ V(\Lambda) = \int\prod_{i=1}^{n}q_i^{\beta-1}U(M)dq $$
is a symmetric homogeneous polynomial of degree $k$ in $\lambda_1,\ldots,\lambda_n$.
\end{lem}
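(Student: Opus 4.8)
The plan is to establish three properties of $V(\Lambda)$ separately: that it is an honest polynomial in $\lambda_1,\ldots,\lambda_n$, that it is homogeneous of degree $k$, and that it is symmetric; together these are exactly the assertion. The single fact that unlocks all three is a clean description of the diagonal block $M$. Writing the arrow matrix as $A = V\Lambda V^t$ with $V$ orthogonal and its last row equal to $q^t$, and letting $W$ be the $(n-1)\times n$ matrix formed by the first $n-1$ rows of $V$, one has $WW^t = I_{n-1}$, $W^tW = I_n - qq^t$, and the top-left $(n-1)\times(n-1)$ block of $A$ is precisely $M = W\Lambda W^t$. By the cyclic property of the trace together with $W^tW = I_n - qq^t$, this gives ${\rm trace}(M^p) = {\rm trace}\big((W\Lambda W^t)^p\big) = {\rm trace}\big(((I-qq^t)\Lambda)^p\big)$ for every $p\ge 1$; equivalently, the eigenvalues of $M$ are the $n-1$ nonzero eigenvalues of $(I-qq^t)\Lambda = {\rm Proj}_{q^\perp}\Lambda$.

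With this identity in hand, \textbf{polynomiality and homogeneity} fall out. Since $U$ is a symmetric polynomial in the $n-1$ eigenvalues of $M$, it is a polynomial in the power sums ${\rm trace}(M),\ldots,{\rm trace}(M^{n-1})$, hence — by the display above — a polynomial in ${\rm trace}(((I-qq^t)\Lambda)^p)$, $p=1,\ldots,n-1$, each of which is a genuine polynomial in the $\lambda_i$ and in the $q_iq_j$. Thus $U(M)$ is a polynomial in $\lambda$ whose coefficients are polynomials in $q$, and because $U$ is homogeneous of degree $k$ while ${\rm trace}(((I-qq^t)\Lambda)^p)$ is homogeneous of degree $p$ in $\Lambda$, every monomial occurring in $U(M)$ has $\lambda$-degree exactly $k$. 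Writing $U(M) = \sum_{|\alpha|=k} c_\alpha(q)\,\lambda^\alpha$ and integrating against $\prod_i q_i^{\beta-1}\,dq$ over the positive quadrant of the unit sphere — a finite measure since $\beta>0$, against which $U(M)$ is integrable (it is bounded, as $M$'s eigenvalues lie in $[\lambda_n,\lambda_1]$ by interlacing) — yields $V(\Lambda) = \sum_{|\alpha|=k}\big(\int \prod_i q_i^{\beta-1} c_\alpha(q)\,dq\big)\lambda^\alpha$, a homogeneous polynomial of degree $k$ in $\lambda$.

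For \textbf{symmetry}, fix a permutation $\sigma$ of $\{1,\ldots,n\}$. Permuting the eigenvalue/eigenvector pairs of $A$ simultaneously leaves the arrow matrix unchanged, hence leaves $M$ unchanged up to a relabeling of its diagonal that the symmetric function $U$ does not see; at the level of the Lemma 3 parametrization this reads $U(M(\sigma\Lambda,\sigma q)) = U(M(\Lambda,q))$. Substituting $q\mapsto\sigma q$ in the integral, and using that the spherical surface element, the weight $\prod_i q_i^{\beta-1}$, and the positive quadrant are all invariant under coordinate permutations, gives $V(\sigma\Lambda) = V(\Lambda)$. Combining the three parts, $V$ is a symmetric homogeneous polynomial of degree $k$ in $\lambda_1,\ldots,\lambda_n$, as claimed.

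\textbf{The main obstacle} is the first step. A priori the diagonal entries $\mu_i$ of $M$ depend on $(\Lambda,q)$ only algebraically — they are the roots of the secular equation of the arrow matrix — so it is not at all evident that $U(M)$, still less its $q$-average, is polynomial in $\lambda$. The escape is to never touch the $\mu_i$ individually but to work only with their power sums ${\rm trace}(M^p)$, which the factorization $M = W\Lambda W^t$ with $W^tW = I_n - qq^t$ converts into bona fide polynomials in $\lambda$ and $q$; after that, only degree bookkeeping and the routine convergence of the spherical integral remain. For full rigor one should also record that these identifications hold off the measure-zero set where the $\lambda_i$ (equivalently the $\mu_i$) coincide — precisely the set Lemma 3 already excludes — which is harmless for a polynomial identity.
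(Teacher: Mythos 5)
Your proof is correct and takes essentially the same approach as the paper: both reduce to identifying the power sums of $M$ with those of the projected matrix (the paper by conjugating $(I-e_ne_n^t)A(I-e_ne_n^t)$ by the eigenvector matrix to get $(I-qq^t)\Lambda(I-qq^t)$, you by writing $M=W\Lambda W^t$ with $W^tW=I-qq^t$ and using cyclicity of the trace), then deduce polynomiality and degree $k$ from the power-sum expansion and obtain symmetry by the same simultaneous swap of the $\lambda_i$'s and $q_i$'s. Your added remarks on integrability and interlacing are extra rigor, not a different argument.
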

\begin{proof}
Let $e_n$ be the column vector that is $0$ everywhere except in the last entry, which is $1$. $(I - e_ne_n^t)A(\Lambda,q)(I - e_ne_n^t)$ has eigenvalues $\{\mu_1,\ldots,\mu_{n-1},0\}$. If the eigenvector matrix of $A(\Lambda,q)$ is $Q$, so must
$$Q^t(I - e_ne_n^t)Q\Lambda Q^t(I - e_ne_n^t)Q$$
have those eigenvalues. But this is
$$ (I - qq^t)\Lambda(I-qq^t). $$
So
\begin{equation}
U(M) = U({\rm eig}((I - qq^t)\Lambda(I-qq^t))\backslash\{0\}).
\end{equation}
It is well known that we can write $U(M)$ in the power-sum ring, $U(M)$ is made of sums and products of functions of the form $\mu_1^p+\cdots+\mu_{n-1}^p$, where $p$ is a positive integer. Therefore, the RHS is made of functions of the form $$ \mu_1^p+\cdots+\mu_{n-1}^p+0^p = {\rm trace}(((I - qq^t)\Lambda(I-qq^t))^p), $$
which if $U(M)$ is order $k$ in the $\mu_i$'s, must be order $k$ in the $\lambda_i$'s. So $V(\Lambda)$ is a polynomial of order $k$ in the $\lambda_i's$. Switching $\lambda_1$ and $\lambda_2$ and also $q_1$ and $q_2$ leaves
$$ \int\prod_{i=1}^{n}q_i^{\beta-1}U({\rm eig}((I - qq^t)\Lambda(I-qq^t))\backslash\{0\}) dq $$
invariant, so $V(\Lambda)$ is symmetric.

\end{proof}

Theorem 2 is a new theorem about Jack polynomials.
\begin{thm}
Let the arrow matrix below have eigenvalues in $\Lambda = {\rm diag}(\lambda_1,\ldots,\lambda_n)$ and have $q$ be the last row of its eigenvector matrix, i.e. $q$ contains the $n$-th element of each eigenvector,
$$
A(\Lambda,q) = \left[\begin{array}{cccc}
\mu_1 &              &               & c_1 \\
       & \ddots   &               & \vdots \\
       &              & \mu_{n-1} & c_{n-1} \\
c_1 & \cdots   & c_{n-1} & c_n
\end{array}\right]
=
\left[\begin{array}{cccc}
 &              &               & c_1 \\
       & M   &               & \vdots \\
       &              &   & c_{n-1} \\
c_1 & \cdots   & c_{n-1} & c_n
\end{array}\right],
$$
By Lemma 3 this is a well-defined map except on a set of measure zero. Then, if for a partition $\kappa$, $l(\kappa) < n$, and $q$ on the first quadrant of the unit sphere,
$$ C^{(\beta)}_\kappa(\Lambda) \propto \int\prod_{i=1}^{n}q_i^{\beta-1}C_\kappa^{(\beta)}(M)dq. $$
\end{thm}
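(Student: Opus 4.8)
\noindent\textit{Proof plan.} The plan is to show that the right-hand side, viewed as a polynomial in $\lambda$, is forced to be a scalar multiple of $C_\kappa^{(\beta)}(\Lambda)$ by testing it against Hermite polynomials. Write $V(\Lambda):=\int\prod_{i=1}^n q_i^{\beta-1}C_\kappa^{(\beta)}(M)\,dq$. Since $C_\kappa^{(\beta)}$ is symmetric and homogeneous of degree $k:=|\kappa|$, Lemma 9 shows $V$ is a symmetric homogeneous polynomial of degree $k$ in $\lambda_1,\ldots,\lambda_n$, so $V=\sum_{\tau\vdash k,\,l(\tau)\le n}a_\tau C_\tau^{(\beta)}(\Lambda)$. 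It remains to show $a_\tau=0$ for all $\tau\neq\kappa$ (and $a_\kappa\neq0$, so the statement is not vacuous).

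The device for isolating the coefficients is the Hermite inner product. For $N\in\{n-1,n\}$ write $\langle f,g\rangle_N=\int f\,g\,\exp(-\frac{1}{2}\sum_{i=1}^N x_i^2)\,\Delta(x)^\beta\,dx$, with respect to which the Hermite polynomials $H_\mu^{(\beta)}$ are orthogonal; in particular $H_\mu^{(\beta)}$ is orthogonal to every symmetric polynomial of degree $<|\mu|$, since such polynomials are spanned by the $H_\nu^{(\beta)}$ with $|\nu|<|\mu|$. Next I would introduce the auxiliary function $\tilde V(\Lambda):=\int\prod_i q_i^{\beta-1}H_\kappa^{(\beta)}(M)\,dq$, with $H_\kappa^{(\beta)}$ the $(n-1)$-variable Hermite polynomial. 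Expanding $H_\kappa^{(\beta)}(M)$ in Jack polynomials (Definition 2) and pulling the $q$-integral inside gives $\tilde V=\sum_{\sigma\subseteq\kappa}\frac{c^{(\beta)}_{\kappa,\sigma}}{C_\sigma^{(\beta)}(I_{n-1})}V_\sigma$, where $V_\sigma:=\int\prod_i q_i^{\beta-1}C_\sigma^{(\beta)}(M)\,dq$ is symmetric and homogeneous of degree $|\sigma|$ (Lemma 9 again) and $V_\kappa=V$. Thus, modulo symmetric terms of degree $<k$, $\tilde V$ is a nonzero multiple of $V$; consequently, for any $\rho\vdash k$, low-degree orthogonality makes $\langle H_\rho^{(\beta)},\tilde V\rangle_n$ equal to a nonzero constant times $\langle H_\rho^{(\beta)},V\rangle_n=a_\rho\langle H_\rho^{(\beta)},C_\rho^{(\beta)}\rangle_n$, and $\langle H_\rho^{(\beta)},C_\rho^{(\beta)}\rangle_n\neq0$.

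It therefore suffices to show $\langle H_\rho^{(\beta)},\tilde V\rangle_n=0$ for $\rho\vdash k$, $\rho\neq\kappa$. I would compute this by unfolding the definition of $\tilde V$, combining into a single integral over $(q,\lambda)$, and changing variables to the arrow-matrix entries $(c,d)$ via the bijection of Lemma 3. Using the Jacobian of Lemma 1, the Vandermonde identity of Lemma 5 (which supplies exactly the factor $\prod q_i^{-\beta}$), and $\sum\lambda_i^2={\rm trace}(A^2)=\sum d_i^2+c_n^2+2\sum_{i<n}c_i^2$, all the $q$-dependence cancels and the Hermite weight $\exp(-\frac{1}{2}\sum\lambda_i^2)\Delta(\lambda)^\beta$ pulls back to the product $\Delta(d)^\beta\exp(-\frac{1}{2}\sum d_i^2)\prod_{i<n}\!\big(c_i^{\beta-1}e^{-c_i^2}\big)\,e^{-\frac{1}{2}c_n^2}$. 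Since $H_\kappa^{(\beta)}(M)=H_\kappa^{(\beta)}(d_1,\ldots,d_{n-1})$ does not involve $c$, integrating out $c_1,\ldots,c_{n-1}$ first turns the inner integral into precisely the quantity $Q_\rho(d,c_n)$ of Lemma 8, which is a constant multiple of $C_\rho^{(\beta)}(d)$ plus a symmetric remainder of degree $<k$ in $d$. What remains is $\int\langle Q_\rho(\,\cdot\,,c_n),H_\kappa^{(\beta)}\rangle_{n-1}\,e^{-\frac{1}{2}c_n^2}\,dc_n$; the remainder drops out by the same low-degree orthogonality, and $\langle C_\rho^{(\beta)},H_\kappa^{(\beta)}\rangle_{n-1}$ vanishes unless $\rho=\kappa$ (and is nonzero when $\rho=\kappa$). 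The partitions with $l(\rho)=n$ need one remark: there $C_\rho^{(\beta)}$ vanishes identically in $n-1$ variables by Lemma 7, and correspondingly the degree-$k$ part of $Q_\rho$ is absent, so $\langle H_\rho^{(\beta)},\tilde V\rangle_n=0$ for these too. Comparing the two evaluations forces $a_\rho=0$ for $\rho\neq\kappa$ and $a_\kappa\neq0$, hence $V\propto C_\kappa^{(\beta)}(\Lambda)$.

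The main obstacle is the change-of-variables computation: one has to see that the Hermite weight in $\lambda$ is exactly the weight whose pullback under the arrow-matrix parametrization, together with $\prod q_i^{\beta-1}$ and the Lemma 1 Jacobian, decouples into an independent product over the entries $(c,d)$ --- this is what makes Lemma 8 the right tool and, in effect, why the Hermite (rather than, say, Laguerre) measure is the correct test object. It is worth stressing that a Laplace--Beltrami eigenfunction argument cannot substitute here: the eigenvalue $\rho_\kappa^\alpha+k(n-1)$ does not separate partitions of $k$ in general (for instance $(4,1,1)$ and $(3,3)$ share it for every $\beta$), so the Hermite-orthogonality mechanism above is genuinely needed.
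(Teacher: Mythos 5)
Your proposal is correct and follows essentially the same route as the paper: it expands against the Hermite basis, changes variables to the arrow-matrix entries via Lemmas 1, 3 and 5 so that the $q$-dependence cancels and the Gaussian weight decouples, invokes Lemma 8 to identify the inner integral $Q_\rho$, and finishes with Hermite orthogonality in $n-1$ variables (your auxiliary $\tilde V$ is exactly the paper's $\eta_\kappa$, and your coefficient test $\langle H_\rho,\tilde V\rangle_n$ is the paper's $c(\kappa^{(0)},\kappa)$). The only differences are cosmetic bookkeeping --- you expand $V$ directly in Jack polynomials rather than first proving $\eta_\kappa\propto H_\kappa(\Lambda)$ and then comparing top-degree parts --- plus welcome explicit remarks on the $l(\rho)=n$ partitions and the non-vanishing of the constant.
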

\begin{proof}
Define
$$ \eta_\kappa^{(\beta)}(\Lambda) = \int\prod_{i=1}^{n}q_i^{\beta-1}H_\kappa^{(\beta)}(M)dq. $$
This is a symmetric polynomial in $n$ variables (Lemma 9). Thus it can be expanded in Hermite polynomials with max order $|\kappa|$ (Lemma 9):
$$ \eta_\kappa^{(\beta)}(\Lambda) = \sum_{|\kappa^{(0)}|\leq|\kappa|} c(\kappa^{(0)},\kappa)H_{\kappa^{(0)}}^{(\beta)}(\Lambda), $$
where $|\kappa| = \kappa_1 + \kappa_2 + \cdots + \kappa_{l(\kappa)}$. Using orthogonality, from the previous definition of Hermite Polynomials,

$$ c(\kappa^{(0)},\kappa) \propto \int_{\Lambda\in\mathbb{R}^n}\int_{q}\prod_{i=1}^{n}q_i^{\beta-1}H_\kappa^{(\beta)}(M) H_{\kappa^{(0)}}^{(\beta)}(\Lambda) $$ $$\times \exp(-\frac{1}{2}{\rm trace}(\Lambda^2))\prod_{i\neq j}|\lambda_i-\lambda_{j}|^{\beta}dqd\lambda. $$
Using Lemmas 1 and 3,
$$ c(\kappa^{(0)},\kappa) \propto \int\prod_{i=1}^{n}q_i^{\beta-1}H_\kappa^{(\beta)}(M) H_{\kappa^{(0)}}^{(\beta)}(\Lambda) $$ $$\times \exp(-\frac{1}{2}{\rm trace}(\Lambda^2))\prod_{i\neq j}|\lambda_{i}-\lambda_{j}|^{\beta}\frac{\prod_{i=1}^{n}q_i}{\prod_{i=1}^{n-1}c_i}d\mu dc. $$
Using Lemma 6,
$$ c(\kappa^{(0)},\kappa) \propto \int\prod_{i=1}^{n-1}c_i^{\beta-1}H_\kappa^{(\beta)}(M) H_{\kappa^{(0)}}^{(\beta)}(\Lambda) $$ $$\times \exp(-\frac{1}{2}{\rm trace}(\Lambda^2))\prod_{i\neq j}|\mu_{i}-\mu_{j}|^{\beta}d\mu dc, $$
and by substitution
$$ c(\kappa^{(0)},\kappa) \propto \int\prod_{i=1}^{n-1}c_i^{\beta-1}H_\kappa^{(\beta)}(M) H_{\kappa^{(0)}}^{(\beta)}(A(\Lambda,q)) $$ $$\times \exp(-\frac{1}{2}{\rm trace}(A(\Lambda,q)^2))\prod_{i\neq j}|\mu_{i}-\mu_{j}|^{\beta}d\mu dc. $$
Define
$$ Q(\mu,c_n) = \int\prod_{i=1}^{n-1}c_i^{\beta-1} H_{\kappa^{(0)}}^{\beta}(A(\Lambda,q)) \exp(-c_1^2-\cdots-c_{n-1}^2)dc_1\cdots dc_{n-1}.$$
$Q(\mu,c_n)$ is a symmetric polynomial in $\mu$ (Lemma 8). Furthermore, by Lemma 8,
$$ Q(\mu,c_n) \propto H_{\kappa^{(0)}}^{(\beta)}(M) + L.O.T., $$
where the Lower Order Terms are of lower order than $|\kappa^{(0)}|$ and are symmetric polynomials. Hence they can be written in a basis of lower order Hermite Polynomials, and as
$$ c(\kappa^{(0)},\kappa) \propto \int H_\kappa^{(\beta)}(M)Q(\mu,c_n)$$$$\times\prod_{i\neq j}|\mu_i-\mu_j|^{\beta}\exp\left(-\frac{1}{2}(c_n^2 + \mu_1^2 + \cdots + \mu_{n-1}^2)\right) d\mu dc_{n}, $$
we have by orthogonality
$$ c(\kappa^{(0)},\kappa) \propto \delta(\kappa^{(0)},\kappa), $$
where $\delta$ is the Dirac Delta. So
$$ \eta_\kappa^{(\beta)}(\Lambda) = \int\prod_{i=1}^{n}q_i^{\beta-1}H_\kappa^{(\beta)}(M)dq \propto H_\kappa^{(\beta)}(\Lambda). $$
By Lemma 9, coupled with Definition 2,
$$ C_\kappa^{(\beta)}(\Lambda) \propto \int\prod_{i=1}^{n}q_i^{\beta-1}C_\kappa^{(\beta)}(M)dq. $$
\end{proof}

\begin{cor} Finding the proportionality constant: For $l(\kappa) < n$,
$$ C_\kappa^{(\beta)}(\Lambda) = 
\frac{C_\kappa^{(\beta)}(I_n)}{C_\kappa^{(\beta)}(I_{n-1})}\cdot\frac{2^{n-1}\Gamma(n\beta/2)}{\Gamma(\beta/2)^n}\cdot\int\prod_{i=1}^{n}q_i^{\beta-1}C_\kappa^{(\beta)}((I-qq^t)\Lambda)dq. $$
\end{cor}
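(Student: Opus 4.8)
The plan is to read the relation off Theorem~2 up to a multiplicative constant, and then pin that constant down by evaluating both sides at $\Lambda = I_n$. First, note that the matrix $M$ appearing in Theorem~2 is exactly the nonzero part of the spectrum of $(I-qq^t)\Lambda$: since $I-qq^t$ is the orthogonal projection onto $q^{\perp}$, the matrices $(I-qq^t)\Lambda$ and $(I-qq^t)\Lambda(I-qq^t)$ have the same nonzero eigenvalues, namely $\mu_1,\dots,\mu_{n-1}$. Because $l(\kappa)<n$, Lemma~7 allows us to discard the zero eigenvalue, so Theorem~2 reads
$$ C_\kappa^{(\beta)}(\Lambda) = c\int\prod_{i=1}^{n}q_i^{\beta-1}\,C_\kappa^{(\beta)}\!\left((I-qq^t)\Lambda\right)dq $$
for some constant $c=c(\kappa,\beta,n)$, the integral being over the positive quadrant of the unit sphere. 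Both sides are polynomials in the entries of $\Lambda$, so this holds for every $\Lambda$, in particular for $\Lambda=I_n$.

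Next, evaluate at $\Lambda=I_n$. Then $(I-qq^t)I_n = I-qq^t$ has eigenvalues $1$ with multiplicity $n-1$ and $0$, independently of $q$, so by Lemma~7, $C_\kappa^{(\beta)}(I-qq^t)=C_\kappa^{(\beta)}(I_{n-1})$, a nonzero constant (here $l(\kappa)\le n-1$). Hence
$$ C_\kappa^{(\beta)}(I_n) = c\,C_\kappa^{(\beta)}(I_{n-1})\,J_n,\qquad J_n:=\int\prod_{i=1}^{n}q_i^{\beta-1}\,dq, $$
and the only remaining task is to compute the surface integral $J_n$.

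To evaluate $J_n$, I would compute the integral $\int_{[0,\infty)^n}\prod_{i=1}^{n}x_i^{\beta-1}e^{-\sum_i x_i^2}\,dx$ in two ways. In Cartesian coordinates it factors as $\big(\int_0^\infty x^{\beta-1}e^{-x^2}dx\big)^n = \big(\frac12\Gamma(\beta/2)\big)^n$, using the substitution $u=x^2$. In polar coordinates $x=rq$, with $q$ on the positive quadrant of the unit sphere and $dx=r^{n-1}\,dr\,dq$, the same integral equals $\big(\int_0^\infty r^{n\beta-1}e^{-r^2}dr\big)\,J_n = \frac12\Gamma(n\beta/2)\,J_n$. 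Equating the two gives $J_n = \frac{\Gamma(\beta/2)^n}{2^{n-1}\Gamma(n\beta/2)}$. Substituting this into the relation of the previous paragraph yields
$$ c = \frac{C_\kappa^{(\beta)}(I_n)}{C_\kappa^{(\beta)}(I_{n-1})}\cdot\frac{2^{n-1}\Gamma(n\beta/2)}{\Gamma(\beta/2)^n}, $$
which is exactly the asserted constant.

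There is no serious obstacle here; once Theorem~2 is in hand the work is a short computation. The points needing care are (i) identifying $M$ with the nonzero spectrum of $(I-qq^t)\Lambda$, so that Theorem~2 applies verbatim; (ii) the use of Lemma~7 together with the hypothesis $l(\kappa)<n$, which is also what guarantees $C_\kappa^{(\beta)}(I_{n-1})\neq 0$; and (iii) the elementary substitution $u=x^2$ in the one-dimensional gamma integrals. Finally, since the normalized density $J_n^{-1}\prod_i q_i^{\beta-1}$ on the positive quadrant of the sphere is precisely the law of a vector of $n$ independent $\chi_\beta$ variables rescaled to unit length (in polar coordinates the radial factor separates off), the identity can be restated as $C_\kappa^{(\beta)}(\Lambda)\propto E_q\!\left(C_\kappa^{(\beta)}(\mathrm{Proj}_{q^{\perp}}\Lambda)\right)$, which is the second form in the corollary.
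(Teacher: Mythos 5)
Your proposal is correct and follows essentially the same route as the paper: invoke Theorem~2 together with the identification of $M$ with the nonzero spectrum of $(I-qq^t)\Lambda$ (Lemma~7, using $l(\kappa)<n$), evaluate at $\Lambda=I_n$ so the projection gives $C_\kappa^{(\beta)}(I_{n-1})$, and compute $\int\prod_i q_i^{\beta-1}\,dq=\Gamma(\beta/2)^n/\bigl(2^{n-1}\Gamma(n\beta/2)\bigr)$ via the same Gaussian polar-coordinates trick. Your added remarks on the constant being independent of $\Lambda$ and on $C_\kappa^{(\beta)}(I_{n-1})\neq 0$ are minor refinements of the paper's argument, not a different approach.
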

\begin{proof}
By Theorem 2 with Equation (4) (in the proof of Lemma 9),
$$ C_\kappa^{(\beta)}(\Lambda) \propto \int\prod_{i=1}^{n}q_i^{\beta-1}C_\kappa^{(\beta)}({\rm eig}((I-qq^t)\Lambda(I-qq^t))-\{0\})dq, $$
which by Lemma 7 and properties of matrices is 
$$ C_\kappa^{(\beta)}(\Lambda) \propto \int\prod_{i=1}^{n}q_i^{\beta-1}C_\kappa^{(\beta)}((I-qq^t)\Lambda)dq. $$
Now to find the proportionality constant. Let $\Lambda = I_n$, and let $c_p$ be the constant of proportionality.
$$ C_\kappa^{(\beta)}(I_n) = c_p\cdot\int\prod_{i=1}^{n}q_i^{\beta-1}C_\kappa^{(\beta)}(I-qq^t)dq. $$
Since $I-qq^t$ is a projection, we can replace the term in the integral by $C_\kappa^{(\beta)}(I_{n-1})$, which can be moved out. So
$$ c_p = \frac{C_\kappa^{(\beta)}(I_n)}{C_\kappa^{(\beta)}(I_{n-1})}\left(\int\prod_{i=1}^{n}q_i^{\beta-1}dq\right)^{-1}. $$
Now
\begin{eqnarray*} \int\prod_{i=1}^{n}q_i^{\beta-1}dq &=& \frac{2}{\Gamma(n\beta/2)}\int_0^{\infty} r^{n\beta-1}e^{-r^2}dr\int\prod_{i=1}^{n}q_i^{\beta-1}dq \\
&=& \frac{2}{\Gamma(n\beta/2)}\int\int_{0}^{\infty}\prod_{i=1}^{n}(rq_i)^{\beta-1}e^{-r^2}(r^{n-1}drdq) \\
&=& \frac{2}{\Gamma(n\beta/2)}\int\prod_{i=1}^{n}x_i^{\beta-1}e^{-x_1^2-\cdots-x_n^2}dx \\
&=& \frac{2}{\Gamma(n\beta/2)}\left(\int_{0}^{\infty} x_1^{\beta-1}e^{-x_1^2}dx_1\right)^n \\
&=& \frac{2}{\Gamma(n\beta/2)}\left(\frac{\Gamma(\beta/2)}{2}\right)^n \\
&=& \frac{\Gamma(\beta/2)^n}{2^{n-1}\Gamma(n\beta/2)},
\end{eqnarray*}
and the corollary follows.
\end{proof}

\begin{cor}
The Jack polynomials can be defined recursively using Corollary 1 and two results in the compilation \cite{Koev2012}.
\end{cor}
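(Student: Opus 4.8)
The plan is to read Corollary~1 as a terminating recursion for $C^{(\beta)}_\kappa$ in any number of variables, and then to note that, because each ingredient of that recursion is a genuine theorem about the Jack polynomials of Definition~1, the recursion has the Jack polynomials as its unique solution and may therefore be adopted as their definition. Concretely I would run the recursion on the pair $(n,|\kappa|)$, $n$ being the number of variables. The terminal cases are $\kappa=\emptyset$, with $C^{(\beta)}_\emptyset\equiv 1$, and the one-variable evaluations, for which the normalization $\sum_{\mu\vdash k,\,l(\mu)\le 1}C^{(\beta)}_\mu(x_1)=x_1^{k}$ from Definition~1 collapses to $C^{(\beta)}_{(k)}(x_1)=x_1^{k}$ (equivalently, iterate Stanley's identity below down from $C^{(\beta)}_\emptyset$).

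The recursive step, for $\kappa\vdash k$ with $n\ge 2$ and $l(\kappa)\le n$, uses two moves. If $l(\kappa)=n$, I invoke the first result imported from \cite{Koev2012}: Stanley's determinant identity (Propositions~5.1 and~5.5 of \cite{Stanley1989}, already used in Lemma~7) together with its explicit proportionality constant, $C^{(\beta)}_\kappa(X)=\gamma_{\kappa,n,\beta}\,\det(X)\,C^{(\beta)}_{\kappa-1^{n}}(X)$ with $1^n=(1,\ldots,1)$ and $\gamma_{\kappa,n,\beta}$ a known ratio of $C$-normalization constants; since $|\kappa-1^{n}|=k-n<k$ this strictly lowers the degree, and iterating it finitely often reaches $\emptyset$ or a partition $\mu$ with $l(\mu)<n$. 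If $l(\kappa)<n$, I apply Corollary~1 verbatim: $(I-qq^t)\Lambda$ is singular, so by Lemma~7 (using $l(\kappa)<n$) the integrand $C^{(\beta)}_\kappa\big((I-qq^t)\Lambda\big)$ equals $C^{(\beta)}_\kappa$ at the $n-1$ nonzero eigenvalues, i.e.\ a Jack polynomial in $n-1$ variables, which is already available by induction on $n$; and the scalar $C^{(\beta)}_\kappa(I_n)/C^{(\beta)}_\kappa(I_{n-1})$ is made explicit by the second result imported from \cite{Koev2012}, the closed-form evaluation of $C^{(\beta)}_\kappa$ at the identity in terms of $\kappa,n,\beta$. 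After substitution the right-hand side is a fully explicit expression built from data of strictly smaller $(n,|\kappa|)$.

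To conclude I would check well-foundedness and consistency. Well-foundedness: the $l(\kappa)=n$ move strictly decreases $|\kappa|$ and cannot recur forever (the last part of the partition eventually drops to $0$, or the partition empties), the $l(\kappa)<n$ move strictly decreases $n$, and $n=1$ and $\kappa=\emptyset$ are terminal, so every reduction chain halts; hence the recursion pins down a unique family of symmetric polynomials. Consistency: Corollary~1, Stanley's identity, and the identity evaluation all hold for $\{C^{(\beta)}_\kappa\}$ as given by Definition~1, so that family solves the recursion; by uniqueness it is the only solution, and the recursion therefore serves as an equivalent definition, proving Corollary~2.

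The step I expect to be the main obstacle is not the bookkeeping but certifying the spherical average itself: one must verify that $\int\prod_{i=1}^{n}q_i^{\beta-1}C^{(\beta)}_\kappa\big((I-qq^t)\Lambda\big)\,dq$ really is a polynomial in $\Lambda$ with effectively computable coefficients. As in the proof of Corollary~1, this rests on writing $C^{(\beta)}_\kappa$ in the power-sum basis so that the integrand becomes polynomial in the $q_i$ (each $t_p={\rm trace}\big(((I-qq^t)\Lambda)^p\big)$ is), and then evaluating the resulting beta-type moment integrals over the positive quadrant of the unit sphere. A secondary, purely technical point is to track $\gamma_{\kappa,n,\beta}$ in Stanley's determinant identity accurately enough that the recursion reproduces the $C$-normalization exactly, not merely up to an overall scalar.
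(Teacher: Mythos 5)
Your proposal is correct and follows essentially the same route as the paper: alternate Stanley's column-stripping identity $C^{(\beta)}_\kappa(X)\propto\det(X)\,C^{(\beta)}_{\kappa-1^n}(X)$ when $l(\kappa)=n$ with Corollary~1 (plus Lemma~7) to drop a variable when $l(\kappa)<n$, terminating at the one-variable/empty-partition base cases. The only cosmetic difference is that you stay in the $C$ normalization and make the proportionality constants explicit via the closed-form evaluation $C^{(\beta)}_\kappa(I_n)$, whereas the paper runs the recursion in Stanley's $J$ normalization (where the constants are explicit) and converts between $C$ and $J$ at the end; your version is, if anything, slightly more explicit about why the constant in Corollary~1 is not circular when used as a definition.
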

\begin{proof}
By Stanley\cite{Stanley1989}, Proposition 4.2, the Jack polynomial of \textit{one} variable under the $J$ normalization is
$$ J_{\kappa_1}^{(\beta)}(\lambda_1) = \lambda_1^{\kappa_1}(1+(2/\beta))\cdots(1+(\kappa_1-1)(2/\beta)). $$
There exists another recursion for Jack polynomials under the $J$ normalization:
$$ J_\kappa^{(\beta)}(\Lambda) = \det(\Lambda)J_{(\kappa_1-1,\ldots,\kappa_{n}-1)}\prod_{i=1}^{n}(n-i+1+(2/\beta)(\kappa_i-1)), $$
if $\kappa_n > 0$. Note that if $\kappa_n > 0$ we can use the above formula to reduce the size of $\kappa$ in a recursive expression for a Jack polynomial, and if $\kappa_n = 0$ we can use Corollary 1 to reduce the number of variables in a recursive expression for a Jack polynomial. Using those facts together and the conversion between $C$ and $J$ normalizations in \cite{Dumitriu2007}, we can define all Jack polynomials.
\end{proof}

\section{Hypergeometric Functions}

\begin{definition}
We define the hypergeometric function of two matrix arguments and parameter $\beta$, ${{}_0F_0}^{\beta}(X,Y)$, for $n\times n$ matrices $X$ and $Y$, by
$$ {{}_0F_0}^{\beta}(X,Y) = \sum_{k=0}^{\infty}\sum_{\kappa\vdash k, l(\kappa)\leq n}\frac{C_\kappa^{(\beta)}(X)C_\kappa^{(\beta)}(Y)}{k!C_\kappa^{(\beta)}(I)}. $$
as in Koev and Edelman\cite{Koev2006}. It is efficiently calculated using the software described in Koev and Edelman\cite{Koev2006}, {\tt mhg}, which is available online\cite{PlamenURL}. The $C$'s are Jack polynomials under the $C$ normalization, $\kappa\vdash k$ means that $\kappa$ is a partition of the integer $k$, so $\kappa_1 \geq \kappa_2 \geq \cdots \geq 0$ have $|\kappa| = k = \kappa_1 + \kappa_2 + \cdots = k$.
\end{definition}

\begin{lem}
$$ {{}_0F_0}^{(\beta)}(X,Y) = \exp\left(s\cdot {\rm trace}(X)\right){{}_0F_0}^{(\beta)}(X,Y-sI). $$
\end{lem}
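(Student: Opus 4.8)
The plan is to recast the identity as a covariance property of the matrix‑argument exponential ${{}_0F_0}^{(\beta)}$ under scalar shifts, reduce it to a first–order ODE in $s$, then to a single first–order PDE in the eigenvalues of the second argument, and finally check that PDE term by term against the series of Definition 5.

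First I would replace $Y$ by $Y+sI$, so that the claim reads equivalently
$$ {{}_0F_0}^{(\beta)}(X,Y+sI) = \exp\!\left(s\cdot{\rm trace}(X)\right){{}_0F_0}^{(\beta)}(X,Y). $$
Put $\Phi(s) = \exp(-s\cdot{\rm trace}(X)){{}_0F_0}^{(\beta)}(X,Y+sI)$, so $\Phi(0) = {{}_0F_0}^{(\beta)}(X,Y)$; then $\Phi\equiv\Phi(0)$ is equivalent to $\Phi'\equiv 0$, i.e.\ to $\frac{d}{ds}{{}_0F_0}^{(\beta)}(X,Y+sI) = {\rm trace}(X)\,{{}_0F_0}^{(\beta)}(X,Y+sI)$. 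Since the eigenvalues of $Y+sI$ are those of $Y$ shifted by $s$ and $Y,s$ are arbitrary, this is in turn equivalent to the single PDE
$$ \sum_{i=1}^{n}\frac{\partial}{\partial w_i}\,{{}_0F_0}^{(\beta)}(X,W) = {\rm trace}(X)\,{{}_0F_0}^{(\beta)}(X,W), $$
where $w_1,\ldots,w_n$ are the eigenvalues of $W$. (The rearrangements are justified by entirety of ${{}_0F_0}^{(\beta)}$, or one may work throughout with formal power series in $s$.)

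The only real Jack‑polynomial input is the action of the translation operator $\sum_i\partial/\partial w_i = \frac{d}{dt}\big|_{t=0}(W\mapsto W+tI)$ on a single $C_\kappa^{(\beta)}$. By the triangularity of the generalized binomial theorem for Jack polynomials, $C_\kappa^{(\beta)}(W+tI)$ expands in the $C_\mu^{(\beta)}(W)$ with $\mu\subseteq\kappa$, with the power $t^{|\kappa|-|\mu|}$ forced by homogeneity; hence $\sum_i\partial_{w_i}C_\kappa^{(\beta)}(W) = \sum_{\mu=\kappa-\square}b_{\kappa\mu}\,C_\mu^{(\beta)}(W)$ for the one‑box generalized binomial coefficients $b_{\kappa\mu}$. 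Inserting this into the series, swapping the degree‑wise finite summations so that $\mu$ is outside, and using $|\kappa|=|\mu|+1$, reduces the PDE to the family of scalar identities, one per partition $\mu$ with $l(\mu)\le n$,
$$ \sum_{\kappa=\mu+\square}\frac{b_{\kappa\mu}}{(|\mu|+1)\,C_\kappa^{(\beta)}(I_n)}\,C_\kappa^{(\beta)}(X) = {\rm trace}(X)\cdot\frac{C_\mu^{(\beta)}(X)}{C_\mu^{(\beta)}(I_n)}. $$
By the Pieri rule ${\rm trace}(X)\,C_\mu^{(\beta)}(X) = \sum_{\kappa=\mu+\square}\psi'_{\kappa/\mu}\,C_\kappa^{(\beta)}(X)$ and the linear independence of the $C_\kappa^{(\beta)}$ with $l(\kappa)\le n$, this family collapses to the single identity $b_{\kappa\mu} = (|\mu|+1)\,\psi'_{\kappa/\mu}\,C_\kappa^{(\beta)}(I_n)/C_\mu^{(\beta)}(I_n)$ for $\kappa=\mu+\square$.

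I expect this last identity to be the main obstacle: it says that the one‑box generalized binomial coefficient of Jack polynomials equals the Pieri coefficient rescaled by $(|\mu|+1)$ and by the ratio of the ``dimension'' values $C_\kappa^{(\beta)}(I_n)$. It is a standard fact --- essentially the defining relation for that binomial coefficient --- extractable from Stanley\cite{Stanley1989} or the combinatorial formulas of Koev and Edelman\cite{Koev2006}; with it the chain of equivalences closes. An alternative route skips the ODE and instead expands $C_\kappa^{(\beta)}(Y) = C_\kappa^{(\beta)}((Y-sI)+sI)$ by the full binomial theorem, substitutes into the series for ${{}_0F_0}^{(\beta)}(X,Y)$, and matches the coefficient of each $C_\sigma^{(\beta)}(Y-sI)$; this reproves the lemma but reduces it to the iterated, multi‑box version of the very same binomial‑coefficient/Pieri/dimension identity, so the essential difficulty is the same.
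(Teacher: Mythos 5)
Your proposal is correct in outline, but it takes a genuinely different route from the paper. The paper's proof is essentially two lines: the $s=1$ case of the identity is quoted from Baker and Forrester\cite{Baker1997}, and general $s$ then follows from homogeneity of the Jack polynomials, which gives ${{}_0F_0}^{(\beta)}(X,cY)={{}_0F_0}^{(\beta)}(cX,Y)$, applied to $Y-sI=s((1/s)Y-I)$. You instead reprove the underlying translation property from scratch: differentiate in $s$, reduce to the PDE $\sum_{i}\partial_{w_i}{{}_0F_0}^{(\beta)}(X,W)={\rm trace}(X)\,{{}_0F_0}^{(\beta)}(X,W)$, and match Jack coefficients, so that everything rests on your final identity $b_{\kappa\mu}=(|\mu|+1)\,\psi'_{\kappa/\mu}\,C_\kappa^{(\beta)}(I_n)/C_\mu^{(\beta)}(I_n)$ relating the one-box generalized binomial coefficients to the Pieri coefficients for multiplication by ${\rm trace}(X)$. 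That identity is true and standard, so your chain does close; but note that it carries essentially the same content as the $s=1$ statement the paper simply cites, and it is not in Stanley\cite{Stanley1989} in quite this form --- his Pieri rule is stated in the $J$ normalization, and the $C$-normalized, $(|\mu|+1)$-scaled version you need is the one used in the hypergeometric-function literature around Koev--Edelman\cite{Koev2006} (and in Kaneko's work), so the citation should point there or the normalization conversion should be carried out. Two details to tidy in a write-up: when equating coefficients of $C_\mu^{(\beta)}(W)$, keep the restriction $l(\kappa)\le n$ (a box added in row $n+1$ contributes nothing on either side, since such $C_\kappa^{(\beta)}$ vanish on $n\times n$ arguments), and justify termwise differentiation by the absolute, locally uniform convergence of the ${}_0F_0$ series, as you indicate. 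Net comparison: the paper's argument is shorter, buying the whole lemma from one cited analytic fact plus homogeneity; yours is heavier but self-contained modulo the binomial/Pieri identity, and it makes explicit the differential equation that the exponential prefactor is compensating.
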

\begin{proof}
The claim holds for $s = 1$ by Baker and Forrester\cite{Baker1997}. Now, using that fact with the homogeneity of Jack polynomials,
$$ {{}_0F_0}^{(\beta)}(X,Y-sI) = {{}_0F_0}^{(\beta)}(X,s((1/s)Y-I)) = {{}_0F_0}^{(\beta)}(sX,(1/s)Y-I) $$
$$ = \exp\left(s\cdot {\rm trace}(X)\right){{}_0F_0}^{(\beta)}(sX,(1/s)Y) = \exp\left(s\cdot {\rm trace}(X)\right){{}_0F_0}^{(\beta)}(X,Y).$$
\end{proof}

\begin{definition}
We define the generalized Pochhammer symbol to be, for a partition $\kappa = (\kappa_1,\ldots,\kappa_l)$
$$ (a)_\kappa^{(\beta)} = \prod_{i=1}^{l}\prod_{j=1}^{\kappa_i}\left(a-\frac{i-1}{2}\beta+j-1\right). $$
\end{definition}

\begin{definition}
As in Koev and Edelman\cite{Koev2006}, we define the hypergeometric function ${}_1F_1$ to be
$$ {}_1F_1^{(\beta)}(a;b;X,Y) = \sum_{k=0}^{\infty}\sum_{\kappa\vdash k, l(\kappa)\leq n}\frac{(a)_\kappa^{\beta}}{(b)_\kappa^{(\beta)}}\cdot\frac{C_\kappa^{(\beta)}(X)C_\kappa^{(\beta)}(Y)}{k!C_\kappa^{(\beta)}(I)}. $$
The best software available to compute this function numerically is described in Koev and Edelman\cite{Koev2006}, {\tt mhg}.
\end{definition}

\begin{definition}
We define the generalized Gamma function to be
$$ \Gamma_n^{(\beta)}(c) = \pi^{n(n-1)\beta/4}\prod_{i=1}^{n}\Gamma(c-(i-1)\beta/2) $$
for $\Re(c) > (n-1)\beta/2$.
\end{definition}

\section{The $\beta$-Wishart ensemble, and its Spectral Distribution}

The $\beta$-Wishart ensemble for $m\times n$ matrices is defined iteratively; we derive the $m\times n$ case from the $m\times(n-1)$ case. 

\begin{definition} We assume $n$ is a positive integer and $m$ is a real greater than $n-1$. Let $D$ be a positive-definite diagonal $n\times n$ matrix. For $n = 1$, the $\beta$-Wishart ensemble is
$$
Z=\left[\begin{array}{c}
\chi_{m\beta}D_{1,1}^{1/2} \\
0 \\
\vdots \\
0
\end{array}\right],
$$
with $n-1$ zeros, where $\chi_{m\beta}$ represents a random positive real that is $\chi$-distributed with $m\beta$ degrees of freedom. For $n > 1$, the $\beta$-Wishart ensemble with positive-definite diagonal $n\times n$ covariance matrix $D$ is defined as follows: Let $\tau_1,\ldots,\tau_{n-1}$ be one draw of the singular values of the $m\times(n-1)$ $\beta$-Wishart ensemble with covariance $D_{1:(n-1),1:(n-1)}$. Define the matrix $Z$ by
$$
Z=
\left[\begin{array}{cccc}
\tau_1 & &        & \chi_{\beta}D_{n,n}^{1/2} \\
& \ddots &   & \vdots \\
& & \tau_{n-1} &  \chi_{\beta}D_{n,n}^{1/2} \\
& & &               \chi_{(m-n+1)\beta}D_{n,n}^{1/2}
\end{array}\right].
$$
All the $\chi$-distributed random variables are independent. Let $\sigma_1,\ldots,\sigma_n$ be the singular values of $Z$. They are one draw of the singular values of the $m\times n$ $\beta$-Wishart ensemble, completing the recursion. $\lambda_i = \sigma_i^2$ are the eigenvalues of the $\beta$-Wishart ensemble.
\end{definition}

\begin{thm}
Let $\Sigma = diag(\sigma_1,\ldots,\sigma_n)$, $\sigma_1>\sigma_2>\cdots>\sigma_n$. The singular values of the $\beta$-Wishart ensemble with covariance $D$ are distributed by a pdf proportional to
$$ \det(D)^{-m\beta/2}\prod_{i=1}^{n}\sigma_i^{(m-n+1)\beta-1}\Delta^2(\sigma)^\beta {{}_0F_0}^{(\beta)}\left(-\frac{1}{2}\Sigma^2,D^{-1}\right)d\sigma. $$
It follows from a simple change of variables that the ordered $\lambda_i$'s are distributed as
$$ C_{\beta}^{W}\det(D)^{-m\beta/2}\prod_{i=1}^{n}\lambda_i^{\frac{m-n+1}{2}\beta-1}\Delta(\lambda)^{\beta} {{}_0F_0}^{(\beta)}\left(-\frac{1}{2}\Lambda,D^{-1}\right)d\lambda. $$
\end{thm}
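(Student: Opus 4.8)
The plan is to induct on $n$ using the recursive Definition~7. The base case $n=1$ is immediate: $\sigma_1=\chi_{m\beta}D_{1,1}^{1/2}$ has density $\propto D_{1,1}^{-m\beta/2}\sigma_1^{m\beta-1}e^{-\sigma_1^2/(2D_{1,1})}$, and since for $1\times 1$ arguments $C_\kappa^{(\beta)}(x)=x^k$, one has ${}_0F_0^{(\beta)}(-\frac12\sigma_1^2,D_{1,1}^{-1})=e^{-\sigma_1^2/(2D_{1,1})}$, which matches the claimed formula (with $\Delta^2(\sigma)\equiv 1$). For the inductive step I would assume the formula for the $m\times(n-1)$ ensemble with covariance $D_{n-1}:=D_{1:n-1,1:n-1}$, form the broken-arrow matrix $Z$ of Definition~7 out of the ordered level-$(n-1)$ singular values $\tau_1>\cdots>\tau_{n-1}$ together with the independent column entries $a_i=\chi_\beta D_{n,n}^{1/2}$ $(i<n)$, $a_n=\chi_{(m-n+1)\beta}D_{n,n}^{1/2}$, change variables to the singular values of $Z$, and integrate out the extra data. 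Writing down the joint density of $(\tau,a)$ as the inductive-hypothesis density times the $n$ one-dimensional $\chi$-densities, the powers of $D_{n,n}$ that appear are $D_{n,n}^{-\beta/2}$ from each $a_i$ $(i<n)$ and $D_{n,n}^{-(m-n+1)\beta/2}$ from $a_n$; since $(n-1)+(m-n+1)=m$ they multiply to $D_{n,n}^{-m\beta/2}$, which together with $\det(D_{n-1})^{-m\beta/2}$ gives exactly $\det(D)^{-m\beta/2}$.

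Next I would pass to $(q,\sigma)$, where $q$ is the bottom row of the right singular vectors of $Z$, so $A:=Z^tZ$ is an arrow matrix with middle block $M=\mathrm{diag}(\tau_1^2,\dots,\tau_{n-1}^2)$, eigenvalues $\Lambda=\Sigma^2$, and bottom eigenvector-row $q$. By Lemma~3 this is a.e.\ a bijection and by Lemma~2, $da\,d\tau=\frac{\prod_{i<n}a_i}{\prod_i q_i}\,dq\,d\sigma$. Using $\det(Z)^2=\prod\sigma_i^2=a_n^2\prod\tau_i^2$, Lemma~6 in the form $\prod_{i<n}a_i=\Delta^2(\sigma)\prod_k q_k/(\Delta^2(\tau)\prod_k\tau_k)$, and $\sum_i a_i^2=\mathrm{trace}(\Lambda)-\mathrm{trace}(M)=q^t\Lambda q$ (since $\mathrm{trace}(M)=\mathrm{trace}((I-qq^t)\Lambda)=\mathrm{trace}(\Lambda)-q^t\Lambda q$), the Jacobian substitution makes every factor of $\Delta^2(\tau)$ cancel and every power of $\tau_i$ cancel (the exponent collapses: $(m-n+2)\beta-1-\beta-((m-n+1)\beta-1)=0$), the $q$-powers collecting to $\prod_k q_k^{\beta-1}$. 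What remains is
$$ p_n(\sigma)\ \propto\ \det(D)^{-m\beta/2}\,\Delta^2(\sigma)^\beta\prod_i\sigma_i^{(m-n+1)\beta-1}\int \prod_{k=1}^n q_k^{\beta-1}\,e^{-\frac{1}{2D_{n,n}}q^t\Lambda q}\,{}_0F_0^{(\beta)}\!\left(-\tfrac12 M,\,D_{n-1}^{-1}\right)dq. $$

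To evaluate the $q$-integral I would write $q^t\Lambda q=\mathrm{trace}(\Lambda)-\mathrm{trace}(M)$, pull the constant $e^{-\frac{1}{2D_{n,n}}\mathrm{trace}(\Lambda)}$ outside, and use Lemma~10 to absorb $e^{\frac{1}{2D_{n,n}}\mathrm{trace}(M)}$ into the second argument: $e^{\frac{1}{2D_{n,n}}\mathrm{trace}(M)}{}_0F_0^{(\beta)}(-\tfrac12 M,D_{n-1}^{-1})={}_0F_0^{(\beta)}(-\tfrac12 M,\,D_{n-1}^{-1}-\tfrac{1}{D_{n,n}}I_{n-1})$. Expanding in Jack polynomials, every $\kappa$ has $l(\kappa)\le n-1<n$ since $M$ is $(n-1)\times(n-1)$, so Lemma~7 gives $C_\kappa^{(\beta)}(M)=C_\kappa^{(\beta)}((I-qq^t)\Lambda)$ and Corollary~1 evaluates $\int\prod_k q_k^{\beta-1}C_\kappa^{(\beta)}((I-qq^t)\Lambda)\,dq$ in terms of $C_\kappa^{(\beta)}(\Lambda)$. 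Resumming the series (the factors $C_\kappa^{(\beta)}(I_{n-1})$ cancel against Corollary~1's constant), zero-padding $D_{n-1}^{-1}-\tfrac{1}{D_{n,n}}I_{n-1}$ to the $n\times n$ matrix $D^{-1}-\tfrac{1}{D_{n,n}}I_n$ (harmless by Lemma~7, as the $l(\kappa)=n$ terms vanish), and undoing the shift by Lemma~10 once more, the two exponentials $e^{\mp\frac{1}{2D_{n,n}}\mathrm{trace}(\Lambda)}$ cancel, so the $q$-integral equals a $\sigma$- and $D$-independent constant times ${}_0F_0^{(\beta)}(-\tfrac12\Sigma^2,D^{-1})$. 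This closes the induction; the $\lambda$-density then follows from $\lambda_i=\sigma_i^2$, $d\lambda=2^n\prod_i\sigma_i\,d\sigma$, $\Delta^2(\sigma)=\Delta(\lambda)$, and ${}_0F_0^{(\beta)}(-\tfrac12\Sigma^2,D^{-1})={}_0F_0^{(\beta)}(-\tfrac12\Lambda,D^{-1})$.

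The main obstacle is the $q$-integral, and within it the careful tracking of which matrices are $(n-1)\times(n-1)$ and which are $n\times n$: each passage between $C_\kappa^{(\beta)}(M)$, $C_\kappa^{(\beta)}((I-qq^t)\Lambda)$ and the zero-padded arguments rests on $l(\kappa)<n$ and Lemma~7, and the two applications of Lemma~10 must be sign-consistent so that the $\mathrm{trace}(\Lambda)$ exponentials cancel rather than compound. One also must justify integrating the ${}_0F_0^{(\beta)}$ series term by term against $\prod_k q_k^{\beta-1}dq$ over the compact spherical quadrant, which follows from positivity of Jack polynomials on nonnegative arguments and dominated convergence. By comparison the Jacobian manipulations are routine bookkeeping — but the $\tau_i$-exponent cancellation deserves an explicit check, since it is precisely what produces the correct power $(m-n+1)\beta-1$ at the next level.
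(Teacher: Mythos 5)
Your proposal is correct and follows essentially the same route as the paper's proof: induction on $n$, the broken-arrow Jacobian (Lemmas 2--3) together with Lemma 6 to produce the $\prod_i q_i^{\beta-1}$ measure and the exponent $(m-n+1)\beta-1$, the shift identity of Lemma 10, and then the termwise reduction of the $q$-integral to Theorem 2/Corollary 1 with Lemma 7 handling the $l(\kappa)=n$ and zero-padding issues. Your version is if anything slightly more explicit than the paper's on two points the paper leaves as ``$\propto$'' --- the cancellation of the $\kappa$-dependent factor $C_\kappa^{(\beta)}(I_{n-1})/C_\kappa^{(\beta)}(I_n)$ against the ${}_0F_0$ normalization, and the bookkeeping of the $D_{n,n}$ powers --- but it is the same argument.
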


\begin{proof}
First we need to check the $n = 1$ case: the one singular value $\sigma_1$ is distributed as $\sigma_1 = \chi_{m\beta}D^{1/2}_{1,1}$, which has pdf proportional to
$$ D_{1,1}^{-m\beta/2}\sigma_1^{m\beta-1}\exp\left(-\frac{\sigma_1^2}{2D_{1,1}}\right)d\sigma_1. $$
We use the fact that
$${{}_0F_0}^{(\beta)}\left(-\frac{1}{2}\sigma_1^2,D_{1,1}^{-1}\right) = {{}_0F_0}^{(\beta)}\left(-\frac{1}{2D_{1,1}}\sigma_1^2,1\right) =  \exp\left(-\frac{\sigma_1^2}{2D_{1,1}}\right). $$
The first equality comes from the expansion of ${{}_0F_0}$ in terms of Jack polynomials and the fact that Jack polynomials are homogeneous, see the definition of Jack polynomials and ${{}_0F_0}$ in this paper, the second comes from (2.1) in Koev\cite{Koev2012}, or in Forrester\cite{Forrester2005}. We use that ${{}_0F_0}^{(\beta)}(X,I) = {{}_0F_0}^{(\beta)}(X),$ by definition\cite{Koev2006}.

Now we assume $n > 1$. Let 
$$
Z=
\left[\begin{array}{cccc}
\tau_1 & &        & \chi_{\beta}D_{n,n}^{1/2} \\
& \ddots &   & \vdots \\
& & \tau_{n-1} &  \chi_{\beta}D_{n,n}^{1/2} \\
& & &               \chi_{(m-n+1)\beta}D_{n,n}^{1/2}
\end{array}\right] =
\left[\begin{array}{cccc}
\tau_1 & &        & a_1 \\
& \ddots &   & \vdots \\
& & \tau_{n-1} & a_{n-1} \\
& & &                   a_n
\end{array}\right],
$$
so the $a_i$'s are $\chi$-distributed with different parameters. By hypothesis, the $\tau_i$'s are a $\beta$-Wishart draw. Therefore, the $a_i$'s and the $\tau_i$'s are assumed to have joint distribution proportional to
$$ \det(D)^{-m\beta/2}\prod_{i=1}^{n-1}\tau_i^{(m-n+2)\beta-1}\Delta^2(\tau)^{\beta}{{}_0F_0}^{(\beta)}\left(-\frac{1}{2}T^2,D_{1:n-1,1:n-1}^{-1}\right) $$ $$\times\left(\prod_{i=1}^{n-1}a_i\right)^{\beta-1} a_n^{(m-n+1)\beta-1}\exp\left(-\frac{1}{2D_{n,n}}\sum_{i=1}^{n}a_i^2\right)dad\tau, $$
where $T = {\rm diag}(\tau_1,\ldots,\tau_{n-1})$. Using Lemmas 2 and 3, we can change variables to
$$ \det(D)^{-m\beta/2}\prod_{i=1}^{n-1}\tau_i^{(m-n+2)\beta-1}\Delta^2(\tau)^{\beta}{{}_0F_0}^{(\beta)}\left(-\frac{1}{2}T^2,D_{1:n-1,1:n-1}^{-1}\right) $$ $$\times\left(\prod_{i=1}^{n-1}a_i\right)^{\beta} a_n^{(m-n+1)\beta-1}\exp\left(-\frac{1}{2D_{n,n}}\sum_{i=1}^{n}a_i^2\right)\prod_{i=1}^{n}q_i^{-1}\cdot d\sigma dq. $$
Using Lemma 6 this becomes:
$$
\det(D)^{-m\beta/2}\prod_{i=1}^{n-1}\tau_i^{(m-n+1)\beta-1}\Delta^2(\sigma)^{\beta} { }_0F_0^{(\beta)}(-\frac{1}{2}T^2,D_{1:n-1,1:n-1}^{-1})$$ $$\times\exp\left(-\frac{1}{2D_{n,n}}\sum_{i=1}^{n}a_i^2\right)\prod_{i=1}^{n}q_i^{\beta-1}d\sigma dq
$$
Using properties of determinants this becomes:
$$ \det(D)^{-m\beta/2}\prod_{i=1}^{n}\sigma_i^{(m-n+1)\beta-1}\Delta^2(\sigma)^{\beta}{{}_0F_0}^{(\beta)}\left(-\frac{1}{2}T^2,D_{1:n-1,1:n-1}^{-1}\right) $$ $$\times\exp\left(-\frac{1}{2D_{n,n}}\sum_{i=1}^{n}a_i^2\right)\prod_{i=1}^{n}q_i^{\beta-1}\cdot d\sigma dq. $$
To complete the induction, we need to prove
$$ {{}_0F_0}^{(\beta)}\left(-\frac{1}{2}\Sigma^2,D^{-1}\right) \propto \int\prod_{i=1}^{n}q_i^{\beta-1}e^{-\|a\|^2/(2D_{n,n})}{{}_0F_0}^{(\beta)}\left(-\frac{1}{2}T^2,D^{-1}_{1:(n-1),1:(n-1)}\right)dq. $$
We can reduce this expression using $\|a\|^2 + \sum_{i=1}^{n-1}\tau_i^2 = \sum_{i=1}^{n}\sigma_i^2$ that it suffices to show
$$ \exp\left({\rm trace}(\Sigma^2)/(2D_{n,n})\right){{}_0F_0}^{(\beta)}\left(-\frac{1}{2}\Sigma^2,D^{-1}\right)$$ $$ \propto \int\prod_{i=1}^{n}q_i^{\beta-1}\exp\left({\rm trace}(T^2)/(2D_{n,n})\right){{}_0F_0}^{(\beta)}\left(-\frac{1}{2}T^2,D^{-1}_{1:(n-1),1:(n-1)}\right)dq, $$
or moving some constants and signs around,
$$ \exp\left((-1/D_{n,n}){\rm trace}(-\Sigma^2/2)\right){{}_0F_0}^{(\beta)}\left(-\frac{1}{2}\Sigma^2,D^{-1}\right)$$ $$ \propto \int\prod_{i=1}^{n}q_i^{\beta-1}\exp\left((-1/(D_{n,n})){\rm trace}(-T^2/2)\right){{}_0F_0}^{(\beta)}\left(-\frac{1}{2}T^2,D^{-1}_{1:(n-1),1:(n-1)}\right)dq, $$
or using Lemma 10,
$${{}_0F_0}^{(\beta)}\left(-\frac{1}{2}\Sigma^2,D^{-1}-\frac{1}{D_{n,n}}I_n\right) $$ $$\propto \int\prod_{i=1}^{n}q_i^{\beta-1}{{}_0F_0}^{(\beta)}\left(-\frac{1}{2}T^2,D^{-1}_{1:(n-1),1:(n-1)}-\frac{1}{D_{n,n}}I_{n-1}\right)dq. $$
We will prove this expression termwise using the expansion of ${{}_0F_0}$ into infinitely many Jack polynomials. The $(k,\kappa)$ term on the right hand side is
$$ \int\prod_{i=1}^{n}q_i^{\beta-1}C_{\kappa}^{(\beta)}\left( -\frac{1}{2}T^2\right)C_{\kappa}^{(\beta)}\left(D_{1:(n-1),1:(n-1)}^{-1}-\frac{1}{D_{n,n}}I_{n-1}\right)dq, $$
where $\kappa\vdash k$ and $l(\kappa) < n$. The $(k,\kappa)$ term on the left hand side is
$$ C_{\kappa}^{(\beta)}\left( -\frac{1}{2}\Sigma^2\right)C_{\kappa}^{(\beta)}\left(D^{-1}-\frac{1}{D_{n,n}}I_{n-1}\right), $$
where $\kappa\vdash k$ and $l(\kappa) \leq n$. If $l(\kappa) = n$, the term is $0$ by Lemma 7, so either it has a corresponding term on the right hand side or it is zero. Hence, using Lemma 7 again it suffices to show that for $l(\kappa) < n$,
$$ C_{\kappa}^{(\beta)}(\Sigma^2) \propto \int\prod_{i=1}^{n}q_i^{\beta-1}C_{\kappa}^{(\beta)}(T^2)dq. $$
This follows by Theorem 2, and the proof of Theorem 3 is complete.
\end{proof}

\begin{cor}
The normalization constant, for $\lambda_1 > \lambda_2 > \cdots > \lambda_n$:
$$ C_\beta^{W} =  \frac{\det(D)^{-m\beta/2}}{\mathcal{K}_{m,n}^{(\beta)}}, $$
where
$$ \mathcal{K}^{(\beta)}_{m,n} = \frac{2^{mn\beta/2}}{\pi^{n(n-1)\beta/2}}\cdot\frac{\Gamma_n^{(\beta)}(m\beta/2)\Gamma_n^{(\beta)}(n\beta/2)}{\Gamma(\beta/2)^n}, $$
\end{cor}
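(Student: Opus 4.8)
\noindent\emph{Proof proposal.} The constant $C_\beta^W$ is forced by requiring the density of Theorem~3 to integrate to one over $\{\lambda_1>\cdots>\lambda_n>0\}$. Since everything but the ordering constraint is symmetric in the $\lambda_i$, this is equivalent to showing that
$$ N\;:=\;\frac{1}{n!}\int_{(0,\infty)^n}\prod_{i=1}^{n}\lambda_i^{\frac{m-n+1}{2}\beta-1}\,\Delta(\lambda)^{\beta}\;{{}_0F_0}^{(\beta)}\!\left(-\frac{1}{2}\Lambda,D^{-1}\right)d\lambda\;=\;\det(D)^{m\beta/2}\,\mathcal{K}_{m,n}^{(\beta)}, $$
so that $C_\beta^W=1/N=\det(D)^{-m\beta/2}/\mathcal{K}_{m,n}^{(\beta)}$; the $n=1$ case, where ${{}_0F_0}^{(\beta)}(-\frac{1}{2}\lambda_1,D_{1,1}^{-1})=e^{-\lambda_1/(2D_{1,1})}$, already fixes this normalization convention and gives $\mathcal{K}_{m,1}^{(\beta)}=2^{m\beta/2}\Gamma(m\beta/2)$. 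One cannot integrate the ${{}_0F_0}^{(\beta)}$ series term by term directly, since each Jack-polynomial term would be paired with a weight having no exponential decay; so I would first regularize using Lemma~10: for any $s>1/\min_i D_{i,i}$,
$$ {{}_0F_0}^{(\beta)}\!\left(-\frac{1}{2}\Lambda,D^{-1}\right)\;=\;\exp\!\left(-\frac{s}{2}\,{\rm tr}(\Lambda)\right){{}_0F_0}^{(\beta)}\!\left(-\frac{1}{2}\Lambda,\;D^{-1}-sI_n\right), $$
and then expand only the second factor via Definition~7; every summand now carries the damping $e^{-\frac{s}{2}\sum_i\lambda_i}$, and for $s$ this large all of the rearrangements below converge absolutely, so the interchange of sum and integral is legitimate.

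Two classical evaluations then finish the computation. The first is the $\beta$-Laguerre--Selberg integral with a Jack-polynomial insertion,
$$ \int_{(0,\infty)^n}C_\kappa^{(\beta)}(\Lambda)\prod_{i=1}^{n}\lambda_i^{a-1}e^{-\sum_i\lambda_i}\Delta(\lambda)^{\beta}\,d\lambda\;=\;\left(a+\frac{(n-1)\beta}{2}\right)_{\kappa}^{(\beta)}C_\kappa^{(\beta)}(I_n)\,S_n^{(\beta)}(a), $$
where $S_n^{(\beta)}(a):=\int_{(0,\infty)^n}\prod_i\lambda_i^{a-1}e^{-\sum_i\lambda_i}\Delta(\lambda)^{\beta}\,d\lambda$ is the plain Laguerre--Selberg constant; the second is the $\beta$-binomial theorem ${{}_1F_0}^{(\beta)}(c;X)=\det(I-X)^{-c}$. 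Specializing to $a=\frac{m-n+1}{2}\beta$, so that $a+\frac{(n-1)\beta}{2}=\frac{m\beta}{2}$ and $na+\frac{\beta n(n-1)}{2}=\frac{mn\beta}{2}$, rescaling $\lambda\mapsto\frac{2}{s}\lambda$ to reduce the damping to the standard $e^{-\sum_i\lambda_i}$, and using homogeneity $C_\kappa^{(\beta)}(-\frac{1}{2}\Lambda)=(-1/2)^{|\kappa|}C_\kappa^{(\beta)}(\Lambda)$ together with $(-1/2)^{|\kappa|}(s/2)^{-|\kappa|}=(-1/s)^{|\kappa|}$, the $\kappa$-sum collapses (writing $c=\frac{m\beta}{2}$) to
$$ N\;=\;\frac{S_n^{(\beta)}(a)}{n!}\left(\frac{s}{2}\right)^{-mn\beta/2}\sum_{\kappa}\frac{(c)_{\kappa}^{(\beta)}}{|\kappa|!}\,C_\kappa^{(\beta)}\!\left(I_n-\frac{1}{s}D^{-1}\right), $$
and the $\kappa$-sum is exactly ${{}_1F_0}^{(\beta)}\!\left(c;\,I_n-\frac{1}{s}D^{-1}\right)=\det\!\left(\frac{1}{s}D^{-1}\right)^{-c}=s^{mn\beta/2}\det(D)^{m\beta/2}$. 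The powers of $s$ cancel, leaving $N=\frac{1}{n!}\,2^{mn\beta/2}\,S_n^{(\beta)}(a)\,\det(D)^{m\beta/2}$ with $a=\frac{m-n+1}{2}\beta$, which already displays the claimed $\det(D)$-dependence.

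It then remains only to check the purely arithmetic identity $\frac{1}{n!}\,2^{mn\beta/2}\,S_n^{(\beta)}\!\left(\frac{m-n+1}{2}\beta\right)=\mathcal{K}_{m,n}^{(\beta)}$. Substituting the Laguerre--Selberg evaluation $S_n^{(\beta)}(a)=\prod_{j=0}^{n-1}\frac{\Gamma(a+j\beta/2)\,\Gamma(1+(j+1)\beta/2)}{\Gamma(1+\beta/2)}$, using $\Gamma(1+x)=x\,\Gamma(x)$ to pull a factor $n!\,(\beta/2)^n$ out of the $\Gamma(1+(j+1)\beta/2)$ product (which cancels the explicit $1/n!$ and the $(\beta/2)^n$ inside $\Gamma(1+\beta/2)^n=(\beta/2)^n\Gamma(\beta/2)^n$), and recognizing $\prod_{i=1}^{n}\Gamma\!\left(\frac{m\beta}{2}-\frac{(i-1)\beta}{2}\right)$ and $\prod_{i=1}^{n}\Gamma\!\left(\frac{n\beta}{2}-\frac{(i-1)\beta}{2}\right)$ as $\pi^{-n(n-1)\beta/4}\Gamma_n^{(\beta)}(m\beta/2)$ and $\pi^{-n(n-1)\beta/4}\Gamma_n^{(\beta)}(n\beta/2)$ (Definition~6), the product collapses to precisely $\mathcal{K}_{m,n}^{(\beta)}$; this last regrouping is the identity originally due to Forrester, and for $\beta=1,2,4$ it also reproduces the classical full-matrix Wishart eigenvalue normalizations of James, Ratnarajah, and Li, a useful independent check. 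An alternative, entirely in-house route is an induction on $n$ along the recursion of Definition~5: the joint law of $(\tau,a)$ is the inductive-hypothesis density of $\tau$ times an explicit product of $\chi$-densities, and feeding it through the exact Jacobian of Lemma~2, the Vandermonde identity of Lemma~6 (with bijectivity from Lemma~3), and the \emph{exact} constant of Corollary~1 for the spherical $q$-integration of the ${{}_0F_0}^{(\beta)}$ factor produces a one-step recursion for $\mathcal{K}_{m,n}^{(\beta)}$ whose solution with base case $\mathcal{K}_{m,1}^{(\beta)}=2^{m\beta/2}\Gamma(m\beta/2)$ is the stated formula. In either route the one genuine difficulty is bookkeeping: justifying the sum--integral interchange (absolute convergence for $s$ large, or analytic continuation in $D$ to all positive-definite diagonal $D$), and keeping the rescaling powers, the generalized-Pochhammer shift $a\mapsto a+\frac{(n-1)\beta}{2}$, the factor $n!$, and the $\Gamma(1+x)$-versus-$\Gamma(x)$ regrouping all aligned with the $C$-normalization of the Jack polynomials.
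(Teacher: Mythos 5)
Your proposal is correct, but it takes a genuinely different and considerably more elaborate route than the paper. The paper's proof is essentially two lines: since the $\propto$ bookkeeping in Theorem~3 lets only $D$-independent constants be absorbed, the constant may be evaluated at $D=I$, where ${{}_0F_0}^{(\beta)}(-\Lambda/2,I)=\exp(-\mathrm{trace}(\Lambda)/2)$ collapses the density to the $\beta$-Laguerre ensemble, and the normalization $\mathcal{K}_{m,n}^{(\beta)}$ is then quoted from Forrester. You instead integrate the density for \emph{general} diagonal $D$: regularize with Lemma~10 so the Jack expansion can be integrated termwise, evaluate each term with the Laguerre--Selberg integral carrying a Jack-polynomial insertion (Pochhammer shift $a\mapsto a+(n-1)\beta/2$, which correctly produces $c=m\beta/2$), resum via the $\beta$-binomial theorem ${{}_1F_0}^{(\beta)}(c;X)=\det(I-X)^{-c}$ to recover $\det(D)^{m\beta/2}$ exactly, and then verify by direct $\Gamma$-manipulation that $\tfrac{1}{n!}2^{mn\beta/2}S_n^{(\beta)}\bigl(\tfrac{m-n+1}{2}\beta\bigr)=\mathcal{K}_{m,n}^{(\beta)}$; I checked the rescaling powers, the sign/homogeneity bookkeeping, and the final Gamma regrouping, and they are all right (the $n=1$ sanity check and the Tonelli-type justification for $s$ large are also sound). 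What your route buys is self-containedness: it proves, rather than assumes, that the $D$-dependence of the integral is exactly $\det(D)^{m\beta/2}$, and it derives Forrester's constant instead of citing it. What it costs is two classical inputs the paper never needs (the Jack--Laguerre integral and the $\beta$-binomial theorem) plus the convergence discussion; the paper's choice to track $D$ explicitly through the proof of Theorem~3 makes the specialization $D=I$ legitimate and renders all of that machinery unnecessary. Your sketched inductive alternative via Lemma~2, Lemma~6, and the exact constant of Corollary~1 is also viable and closer in spirit to the paper's recursive construction, though you would need to supply the $q$-integral constant from Corollary~1 at each step to close the recursion.
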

\begin{proof}
We have used the convention that elements of $D$ do not move through $\propto$, so we may assume $D$ is the identity. Using ${{}_0F_0}^{(\beta)}(-\Lambda/2,I) = \exp\left(-{\rm trace}(\Lambda)/2\right)$, (Koev\cite{Koev2012}, (2.1)), the model becomes the $\beta$-Laguerre model studied in Forrester\cite{Forrester1994}.
\end{proof}

\begin{cor}
Using Definition 6 of the generalized Gamma, the distribution of $\lambda_{max}$ for the $\beta$-Wishart ensemble with general covariance in diagonal $D$, $P(\lambda_{max}<x)$, is:
$$ \frac{\Gamma_n^{(\beta)}(1+(n-1)\beta/2)}{\Gamma_n^{(\beta)}(1+(m+n-1)\beta/2)}
\det\left(\frac{x}{2} D^{-1}\right)^{m\beta/2}{}_1F_1^{(\beta)}\left(\frac{m}{2}\beta;\frac{m+n-1}{2}\beta+1;-\frac{x}{2} D^{-1}\right). $$
\end{cor}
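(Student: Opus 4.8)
\noindent\textit{Proof proposal.} The plan is to obtain $P(\lambda_{max}<x)$ by integrating the joint eigenvalue density of Theorem 3 over the ordered box $\{0<\lambda_n<\cdots<\lambda_1<x\}$ and then to recognize the resulting integral over the unit cube as a Selberg--Kadell integral, which resums into ${}_1F_1^{(\beta)}$. By Theorem 3 and Corollary 3 the eigenvalue density is $\frac{\det(D)^{-m\beta/2}}{\mathcal{K}_{m,n}^{(\beta)}}\prod_{i}\lambda_i^{\frac{m-n+1}{2}\beta-1}\Delta(\lambda)^\beta\,{}_0F_0^{(\beta)}(-\frac{1}{2}\Lambda,D^{-1})\,d\lambda$ (this is the singular-value density of Theorem 1 after the $d\sigma\to d\lambda$ Jacobian has cancelled the $2^n$). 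First I would substitute $\lambda_i=x\,t_i$: the Jacobian contributes $x^n\,dt$, the monomial and Vandermonde factors contribute $x^{\,mn\beta/2-n}$ (since $\frac{n}{2}(m-n+1)\beta+\frac{n(n-1)}{2}\beta=\frac{mn\beta}{2}$), and homogeneity of the Jack polynomials turns ${}_0F_0^{(\beta)}(-\frac{1}{2}\Lambda,D^{-1})$ into ${}_0F_0^{(\beta)}(-\frac{x}{2}T,D^{-1})$ with $T={\rm diag}(t_1,\dots,t_n)$. Thus
$$ P(\lambda_{max}<x)=\frac{\det(D)^{-m\beta/2}\,x^{mn\beta/2}}{\mathcal{K}_{m,n}^{(\beta)}}\int_{0<t_n<\cdots<t_1<1}\prod_{i=1}^{n}t_i^{\frac{m-n+1}{2}\beta-1}\,\Delta(t)^\beta\,{}_0F_0^{(\beta)}\left(-\frac{x}{2}T,D^{-1}\right)dt, $$
so the claimed $x$- and $D$-dependence $\det(\frac{x}{2}D^{-1})^{m\beta/2}=(x/2)^{mn\beta/2}\det(D)^{-m\beta/2}$ is already visible once the $2^{mn\beta/2}$ inside $\mathcal{K}_{m,n}^{(\beta)}$ is peeled off; only the cube integral and the leftover constant need attention.

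Second, I would evaluate the cube integral termwise. Expanding, by Definition 3, ${}_0F_0^{(\beta)}(-\frac{x}{2}T,D^{-1})=\sum_{\kappa}(-\frac{x}{2})^{|\kappa|}\,\frac{C_\kappa^{(\beta)}(T)\,C_\kappa^{(\beta)}(D^{-1})}{|\kappa|!\,C_\kappa^{(\beta)}(I_n)}$ and interchanging sum and integral (legitimate because the ${}_0F_0^{(\beta)}$ series converges uniformly on this bounded domain), the problem reduces to computing $\int_{\{0<t_n<\cdots<t_1<1\}}\prod_i t_i^{\frac{m-n+1}{2}\beta-1}\Delta(t)^\beta\,C_\kappa^{(\beta)}(T)\,dt$ for each partition $\kappa$. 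This is the Jack-polynomial generalization of Selberg's integral, due to Kadell and to Kaneko, in the special case where the $(1-t_i)$-exponent is $0$: it equals $C_\kappa^{(\beta)}(I_n)$ times the empty-partition value $\frac{S}{n!}$, where $S=\int_{[0,1]^n}\prod_i t_i^{\frac{m-n+1}{2}\beta-1}\Delta(t)^\beta\,dt$ is the classical Selberg integral, times the ratio of generalized Pochhammer symbols (Definition 4) $(\frac{m\beta}{2})_\kappa^{(\beta)}/(1+\frac{(m+n-1)\beta}{2})_\kappa^{(\beta)}$ — the identification of the Pochhammers being a short index shuffle, since with $\alpha=\frac{m-n+1}{2}\beta$, $\gamma=1$, $\delta=\frac{\beta}{2}$ the $i$-th Gamma factor of the Kadell numerator is $\Gamma(\frac{m\beta}{2}-\frac{(i-1)\beta}{2}+\kappa_i)$ and of the denominator $\Gamma(1+\frac{(m+n-1)\beta}{2}-\frac{(i-1)\beta}{2}+\kappa_i)$. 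The factor $C_\kappa^{(\beta)}(I_n)$ then cancels the one in the denominator of the ${}_0F_0^{(\beta)}$ coefficient, and the surviving series $\sum_\kappa\frac{(m\beta/2)_\kappa^{(\beta)}}{(1+(m+n-1)\beta/2)_\kappa^{(\beta)}}\,\frac{C_\kappa^{(\beta)}(-\frac{x}{2}D^{-1})}{|\kappa|!}$ is, by Definition 5 with second matrix argument $I_n$, precisely ${}_1F_1^{(\beta)}(\frac{m}{2}\beta;\frac{m+n-1}{2}\beta+1;-\frac{x}{2}D^{-1})$; hence the cube integral equals $\frac{S}{n!}$ times this. (Equivalently, this step is the eigenvalue-coordinate form of the confluent Euler integral representation of the two-matrix ${}_1F_1^{(\beta)}$, specialized so that its Jacobi weight degenerates because $c-a=1+(n-1)\beta/2$; one could quote that representation instead.)

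Finally I would collect the constants. The overall prefactor of ${}_1F_1^{(\beta)}(\cdots)$ is $\det(D)^{-m\beta/2}(x/2)^{mn\beta/2}$ times $2^{mn\beta/2}/\mathcal{K}_{m,n}^{(\beta)}$ times $\frac{S}{n!}$. Substituting $\mathcal{K}_{m,n}^{(\beta)}=\frac{2^{mn\beta/2}}{\pi^{n(n-1)\beta/2}}\cdot\frac{\Gamma_n^{(\beta)}(m\beta/2)\Gamma_n^{(\beta)}(n\beta/2)}{\Gamma(\beta/2)^n}$, writing $S$ as its standard product of Gamma functions, and re-expressing each such product via Definition 6, one checks that the $\pi^{n(n-1)\beta/2}$ factors cancel, the hidden factorial in $\Gamma_n^{(\beta)}(1+n\beta/2)=(\beta/2)^n n!\,\Gamma_n^{(\beta)}(n\beta/2)$ cancels the $\frac{1}{n!}$, the $\Gamma(\beta/2)^n$ and $\Gamma_n^{(\beta)}(m\beta/2)$ cancel against factors of $S$, and what remains is exactly $\Gamma_n^{(\beta)}(1+\frac{(n-1)\beta}{2})/\Gamma_n^{(\beta)}(1+\frac{(m+n-1)\beta}{2})$, as claimed. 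The only real work is this constant bookkeeping — aligning the powers of $2$ and $\pi$, the hidden $n!$, and the several generalized-Gamma factors coming from $\mathcal{K}_{m,n}^{(\beta)}$, the Selberg normalization, and Definition 6 — together with quoting Kadell's integral correctly; the termwise resummation and the sum--integral interchange are routine.
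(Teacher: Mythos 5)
Your argument is correct, but it is a genuinely different route from the paper's. The paper's proof of this corollary is a one-line citation: it invokes Theorem 6.1 of Koev's notes \cite{Koev2012} (with a remark about a factor of $\beta$ from differing conventions), i.e.\ it treats the passage from the joint density of Theorem 3 to the $\lambda_{\max}$ cdf as a known result. You instead re-derive that passage: integrate the ordered density over $\{0<\lambda_n<\cdots<\lambda_1<x\}$, rescale $\lambda_i = x t_i$ (your exponent count $\tfrac{n(m-n+1)\beta}{2}+\tfrac{n(n-1)\beta}{2}=\tfrac{mn\beta}{2}$ is right), expand ${}_0F_0^{(\beta)}$ termwise, and evaluate each term by the Kadell--Kaneko Selberg--Jack integral with $(1-t_i)$-exponent zero. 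Your Pochhammer identification checks out: with $a=\tfrac{(m-n+1)\beta}{2}$, $b=1$ the $\kappa$-dependence is $(a+\tfrac{(n-1)\beta}{2})_\kappa^{(\beta)}/(a+b+(n-1)\beta)_\kappa^{(\beta)} = (\tfrac{m\beta}{2})_\kappa^{(\beta)}/(1+\tfrac{(m+n-1)\beta}{2})_\kappa^{(\beta)}$, matching the ${}_1F_1^{(\beta)}$ parameters; and the constant bookkeeping does close, since $\Gamma_n^{(\beta)}(1+\tfrac{n\beta}{2}) = n!\,(\beta/2)^n\,\Gamma_n^{(\beta)}(\tfrac{n\beta}{2})$ and $\Gamma(1+\beta/2)=(\beta/2)\Gamma(\beta/2)$ make the leftover factor exactly $1$, leaving $\Gamma_n^{(\beta)}(1+\tfrac{(n-1)\beta}{2})/\Gamma_n^{(\beta)}(1+\tfrac{(m+n-1)\beta}{2})$. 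What your approach buys is a self-contained verification of the constant and the parameter shift that the paper's citation hides (essentially reproducing, for general $\beta$, the classical Constantine--Muirhead argument that underlies the cited theorem); what it costs is the need to import the Kadell/Kaneko integral, an external result not stated in the paper --- though that is no worse than the paper's own reliance on \cite{Koev2012}, and your application of it (parameter ranges $m>n-1$, $\beta>0$, and the sum--integral interchange on a compact domain) is sound.
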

\begin{proof}
See page 14 of \cite{Koev2012}, Theorem 6.1. A factor of $\beta$ is lost due to differences in nomenclature. The best software to calculate this is described in Koev and Edelman\cite{Koev2006}, {\tt mhg}. Convergence is improved using formula (2.6) in Koev\cite{Koev2012}.
\end{proof}

\begin{cor}
The distribution of $\lambda_{min}$ for the $\beta$-Wishart ensemble with general covariance in diagonal $D$, $P(\lambda_{min}<x)$, is:
$$  1 - \exp\left({\rm trace}(-xD^{-1}/2)\right)\sum_{k=0}^{nt}\sum_{\kappa\vdash k,\kappa_1\leq t}\frac{C_\kappa^{(\beta)}(xD^{-1}/2)}{k!}.  $$
It is only valid when $t = (m-n+1)\beta/2-1$ is a nonnegative integer.
\end{cor}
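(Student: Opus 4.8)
Since $P(\lambda_{\min}<x)=1-P(\lambda_{\min}>x)$, the plan is to compute $P(\lambda_{\min}>x)$ by integrating the eigenvalue density of Theorem~3 over the box $\{\lambda_i>x:1\le i\le n\}$ and to show that, when $t=(m-n+1)\beta/2-1$ is a nonnegative integer, it equals $\exp(\mathrm{trace}(-xD^{-1}/2))\sum_{\kappa:\kappa_1\le t,\,l(\kappa)\le n}C_\kappa^{(\beta)}(xD^{-1}/2)/|\kappa|!$ (the bound $k\le nt$ in the statement is then automatic). The first move is the substitution $\lambda_i=x+\mu_i$, $\mu_i\ge 0$ ordered: the Vandermonde factor is preserved, $\Delta(\lambda)=\Delta(\mu)$, while $\prod_i\lambda_i^{\,t}=\det(xI_n+M)^{t}$ becomes a polynomial in $M$ precisely because $t$ is a nonnegative integer, which is the sole role of that hypothesis. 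For the hypergeometric factor I would use that ${{}_0F_0}^{(\beta)}$ is symmetric in its two arguments, together with Lemma~10, to obtain the translation identity
$$ {{}_0F_0}^{(\beta)}\!\left(-\tfrac12(xI_n+M),\,D^{-1}\right)=\exp\!\left(-\tfrac{x}{2}\,\mathrm{trace}(D^{-1})\right){{}_0F_0}^{(\beta)}\!\left(-\tfrac12 M,\,D^{-1}\right), $$
which is exactly what produces the prefactor $\exp(\mathrm{trace}(-xD^{-1}/2))$. Hence $P(\lambda_{\min}>x)=\exp(-\tfrac{x}{2}\mathrm{trace}(D^{-1}))\,g(x)/g(0)$ with $g(x)=\int_{\mu\ge 0}\det(xI_n+M)^{t}\,\Delta(\mu)^{\beta}\,{{}_0F_0}^{(\beta)}(-\tfrac12 M,D^{-1})\,d\mu$ and $g(0)$ the normalizing integral.

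Next I would expand $\det(xI_n+M)^{t}$ in Jack polynomials of $M$. Since $C_{(t^n)}^{(\beta)}(X)=C_{(t^n)}^{(\beta)}(I_n)\det(X)^{t}$ (the rectangular case of Stanley's determinant identity, already used in Lemma~7), the binomial expansion $C_{(t^n)}^{(\beta)}(I_n+M)/C_{(t^n)}^{(\beta)}(I_n)=\sum_\kappa\binom{(t^n)}{\kappa}C_\kappa^{(\beta)}(M)/C_\kappa^{(\beta)}(I_n)$ (with $\binom{(t^n)}{\kappa}$ the generalized binomial coefficients of \cite{Koev2012}) rescales to the finite sum $\det(xI_n+M)^{t}=\sum_{\kappa\subseteq(t^n)}\binom{(t^n)}{\kappa}x^{\,nt-|\kappa|}\,C_\kappa^{(\beta)}(M)/C_\kappa^{(\beta)}(I_n)$, over partitions $\kappa$ with $\kappa_1\le t$, $l(\kappa)\le n$. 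Integrating term by term, each $\int_{\mu\ge 0}C_\kappa^{(\beta)}(M)\,\Delta(\mu)^{\beta}\,{{}_0F_0}^{(\beta)}(-\tfrac12 M,D^{-1})\,d\mu$ is a Jack-polynomial moment against a Laguerre-type weight of parameter $a_0=(n-1)\beta/2+1$; by the Laplace-type (Gamma) integral for Jack polynomials of a matrix argument (Baker and Forrester\cite{Baker1997}; Forrester\cite{Forrester1994}; the compilation \cite{Koev2012}) it equals, up to a factor independent of $\kappa$, $(a_0)_\kappa^{(\beta)}\,C_\kappa^{(\beta)}(2D)$.

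To match the target I would then use the rectangular-complement identity for Jack polynomials: for $\kappa\subseteq(t^n)$ with complement $\hat\kappa=(t-\kappa_n,\dots,t-\kappa_1)$,
$$ \frac{C_\kappa^{(\beta)}(D)}{C_\kappa^{(\beta)}(I_n)}=\det(D)^{t}\,\frac{C_{\hat\kappa}^{(\beta)}(D^{-1})}{C_{\hat\kappa}^{(\beta)}(I_n)}, $$
which follows from Definition~2 --- $\det(X)^{t}C_\kappa^{(\beta)}(X^{-1})$ is a symmetric polynomial that is a $D^*_n$-eigenfunction carrying the eigenvalue and highest lexicographic monomial of $\hat\kappa$, hence is proportional to $C_{\hat\kappa}^{(\beta)}$, the constant fixed at $X=I_n$ --- and which specializes at $\beta=1,2,4$ to the classical complementation of zonal, Schur, and quaternion--zonal polynomials. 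Feeding $C_\kappa^{(\beta)}(2D)=2^{|\kappa|}C_\kappa^{(\beta)}(D)$ and this identity into $g(x)$ and reindexing $\kappa\mapsto\hat\kappa=\nu$ converts $x^{\,nt-|\kappa|}$ into $x^{|\nu|}$ and $C_\kappa^{(\beta)}(D)$ into $C_\nu^{(\beta)}(D^{-1})$; the factor $\det(D)^{t}$ and the residual powers of $2$ are $\nu$-independent and cancel against $g(0)$ (the single $\nu=\varnothing$ term), leaving $g(x)/g(0)=\sum_{\nu\subseteq(t^n)}\big[\binom{(t^n)}{\hat\nu}(a_0)_{\hat\nu}^{(\beta)}/\big((a_0)_{(t^n)}^{(\beta)}C_\nu^{(\beta)}(I_n)\big)\big]\,(x/2)^{|\nu|}\,C_\nu^{(\beta)}(D^{-1})$.

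Comparing with $\sum_\nu C_\nu^{(\beta)}(xD^{-1}/2)/|\nu|!=\sum_\nu(x/2)^{|\nu|}C_\nu^{(\beta)}(D^{-1})/|\nu|!$, the corollary reduces to the single combinatorial identity
$$ \binom{(t^n)}{\hat\nu}\,(a_0)_{\hat\nu}^{(\beta)}\,|\nu|!=(a_0)_{(t^n)}^{(\beta)}\,C_\nu^{(\beta)}(I_n),\qquad a_0=(n-1)\beta/2+1, $$
for every $\nu\subseteq(t^n)$, which I would verify from the explicit product formulas for the generalized binomial coefficient, the Pochhammer symbols, and the principal specialization $C_\nu^{(\beta)}(I_n)$ collected in \cite{Koev2012} (for $n=1$ it is just $\binom{t}{j}(t-j)!\,j!=t!$). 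I expect this last identity --- the one place where the integrality of $t$ and the precise partition shapes must cooperate --- to be the main obstacle. For $\beta=1,2,4$ the whole statement also follows directly from the known full-matrix Wishart densities \cite{James,Ratnarajah2004,Li2009}, so the genuinely new content is the reduction above together with that finite identity.
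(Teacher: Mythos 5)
The paper itself does not derive this corollary from first principles: its entire proof is to combine the joint density of Theorem 3 with Theorem 6.1 of the compilation \cite{Koev2012} (the general-$\beta$ analogue of the classical Khatri/Muirhead finite-sum formula for the smallest eigenvalue), modulo a remark about nomenclature. So your plan is a genuinely different, self-contained route --- essentially the classical $\beta=1$ argument transplanted to general $\beta$ --- and its skeleton is right: the shift $\lambda_i=x+\mu_i$, extracting the prefactor $\exp(-\frac{x}{2}{\rm trace}(D^{-1}))$ from Lemma 10 together with the symmetry of ${}_0F_0^{(\beta)}$ in its two arguments, the observation that integrality of $t$ enters only through polynomiality of $\det(xI+M)^t$, and the bookkeeping down to $g(x)/g(0)$ are all correct; your reduction to a single coefficient identity is exactly what matching the expansion $\sum_\nu (x/2)^{|\nu|}C_\nu^{(\beta)}(D^{-1})/|\nu|!$ requires, and that identity checks out in small cases (e.g. $n=2$, $t=1$, all $\beta$), as it must since the corollary is known.

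As written, however, the proposal is an outline with three open seams, the last of which you yourself flag. First, the termwise integral in your Laplace step must be the two-matrix-argument, general-$\beta$ Gamma-type integral with kernel ${}_0F_0^{(\beta)}(-\frac12 M, D^{-1})$, not the $e^{-{\rm trace}(M)}$ version; for $\beta=1,2,4$ it follows from the matrix Laplace transform by group averaging, but for general $\beta$ it is itself a substantive result (Baker--Forrester) that needs to be quoted precisely, because its exact $\kappa$-dependence $(a_0)_\kappa^{(\beta)}C_\kappa^{(\beta)}(2D)$ is what your final identity is calibrated against. Second, for the complementation identity your eigenfunction sketch needs the actual verification that $\det(X)^{t}C_{\hat\kappa}^{(\beta)}(X^{-1})$ is an eigenfunction of $D_n^*$ with the data of $\kappa$; $D_n^*$ is not manifestly preserved under $x\mapsto x^{-1}$ conjugated by $\det^{t}$, so either carry out that computation or cite the Jack duality. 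Third, and decisively, the final combinatorial identity is left unproven, and it is precisely where the content of the theorem you are re-deriving lives; until it is established you have reduced the corollary to it, not proved it. A concrete way to close that gap: comparing your expansion of $\det(xI+M)^{t}$ with the terminating binomial series $\det(I-Y)^{-a}=\sum_{\kappa}(a)_\kappa^{(\beta)}C_\kappa^{(\beta)}(Y)/|\kappa|!$ at $a=-t$ shows that the rectangular generalized binomial coefficient equals $(-1)^{|\kappa|}(-t)_\kappa^{(\beta)}C_\kappa^{(\beta)}(I_n)/|\kappa|!$, after which your identity becomes pure Pochhammer and principal-specialization bookkeeping over the cells of the $n\times t$ rectangle; alternatively, at that point one is a single citation away from \cite{Koev2012}, which is all the paper's own proof consists of.
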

\begin{proof}
See page 14-15 of \cite{Koev2012}, Theorem 6.1. A factor of $\beta$ is lost due to differences in nomenclature. The best software to calculate this is described in Koev and Edelman\cite{Koev2006}, {\tt mhg}.
\end{proof}

\cite{Koev2012} Theorem 6.2 gives a formula for the distribution of the trace of the $\beta$-Wishart ensemble.

The Figures 1-4 demonstrate the correctness of Corollaries 4 and 5, which are derived from Theorem 3.

\begin{figure}
\centering
\includegraphics[scale=.6]{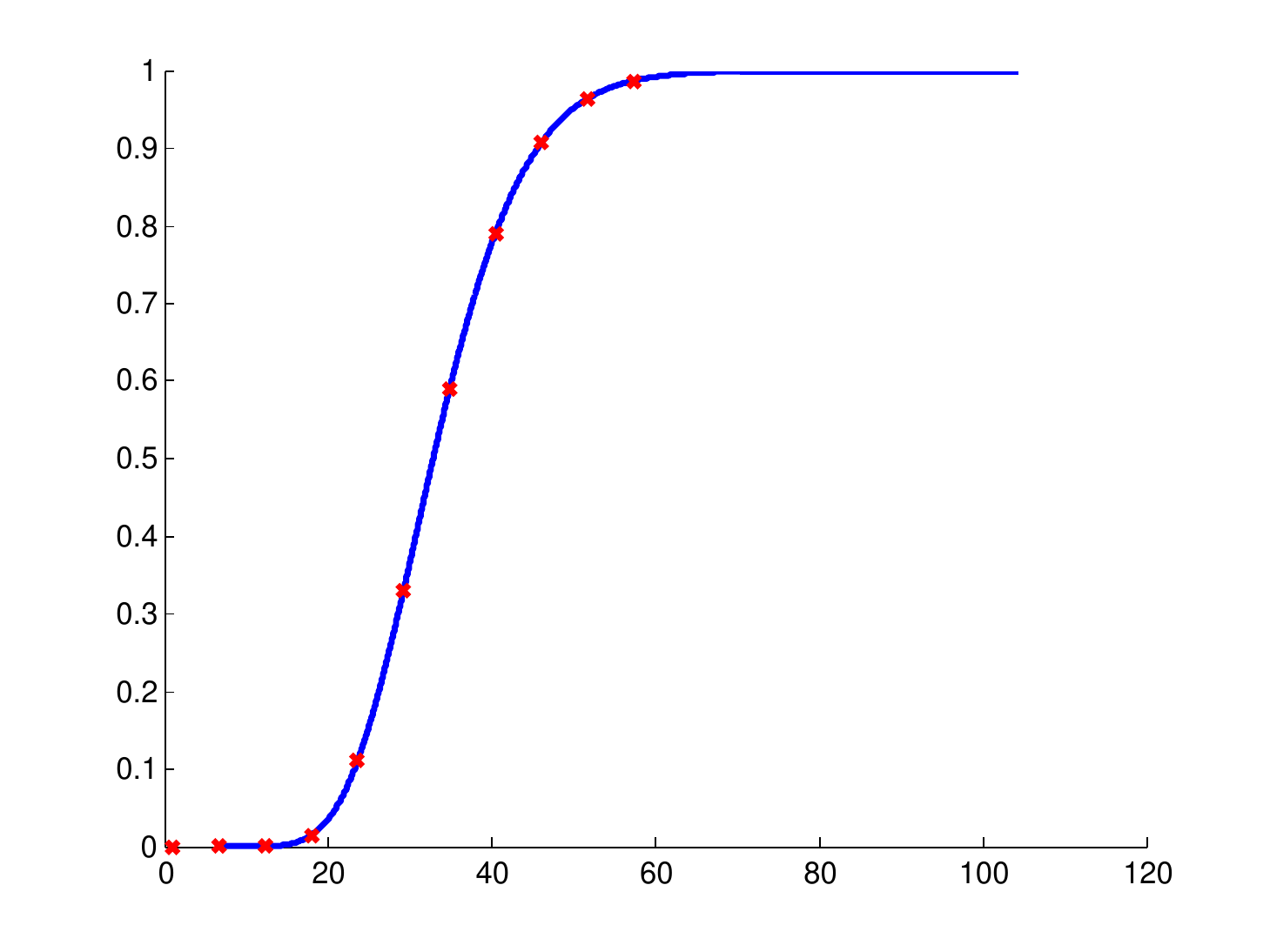}
\caption{The line is the empirical cdf created from many draws of the maximum eigenvalue of the $\beta$-Wishart ensemble, with $m = 4$, $n = 4$, $\beta = 2.5$, and $D = {\rm diag}(1.1,1.2,1.4,1.8)$. The x's are the analytically derived values of the cdf using Corollary 4 and {\tt mhg}.}
\end{figure}

\begin{figure}
\centering
\includegraphics[scale=.6]{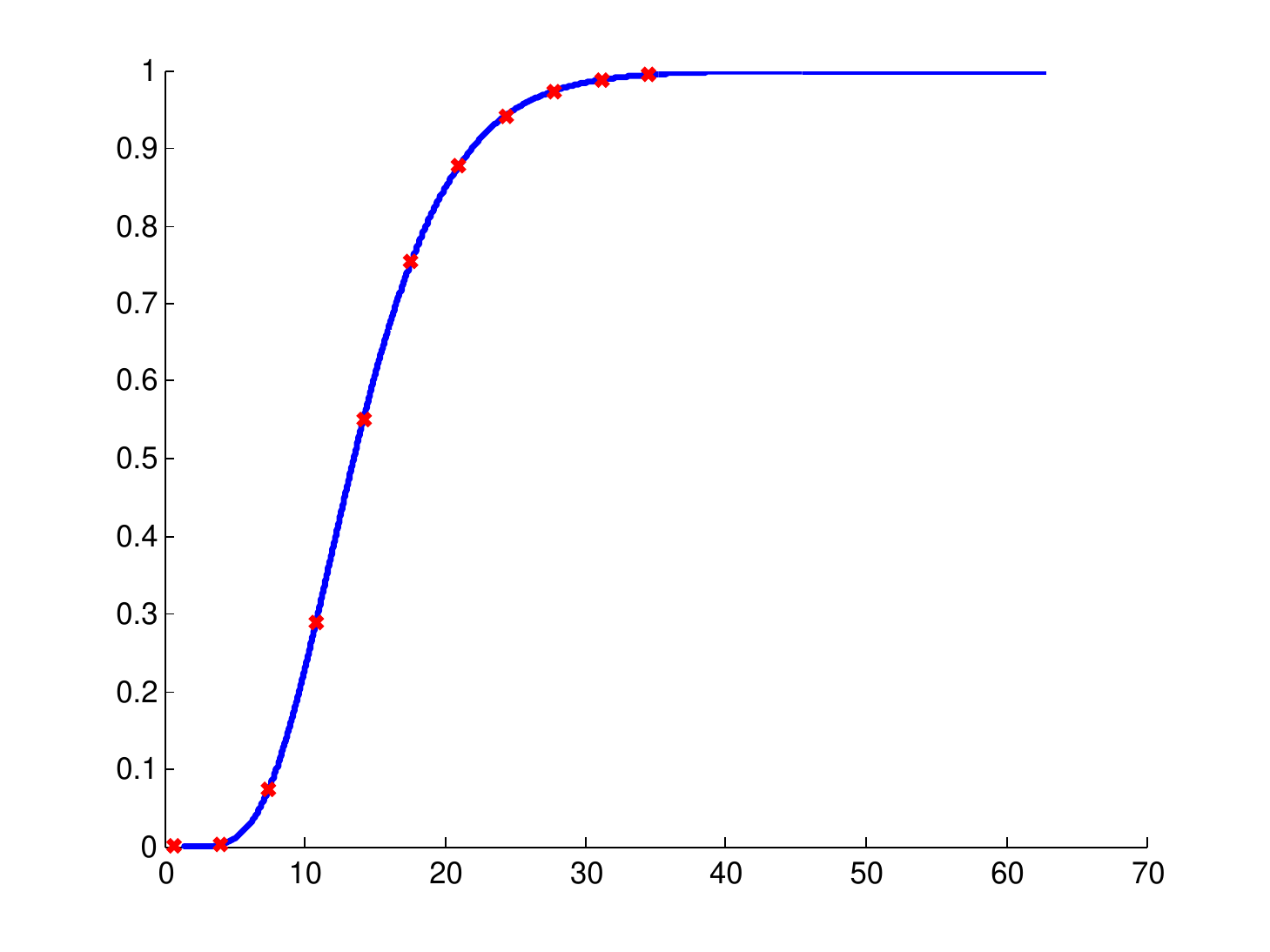}
\caption{The line is the empirical cdf created from many draws of the maximum eigenvalue of the $\beta$-Wishart ensemble, with $m = 6$, $n = 4$, $\beta = 0.75$, and $D = {\rm diag}(1.1,1.2,1.4,1.8)$. The x's are the analytically derived values of the cdf using Corollary 4 and {\tt mhg}.}
\end{figure}

\begin{figure}
\centering
\includegraphics[scale=.6]{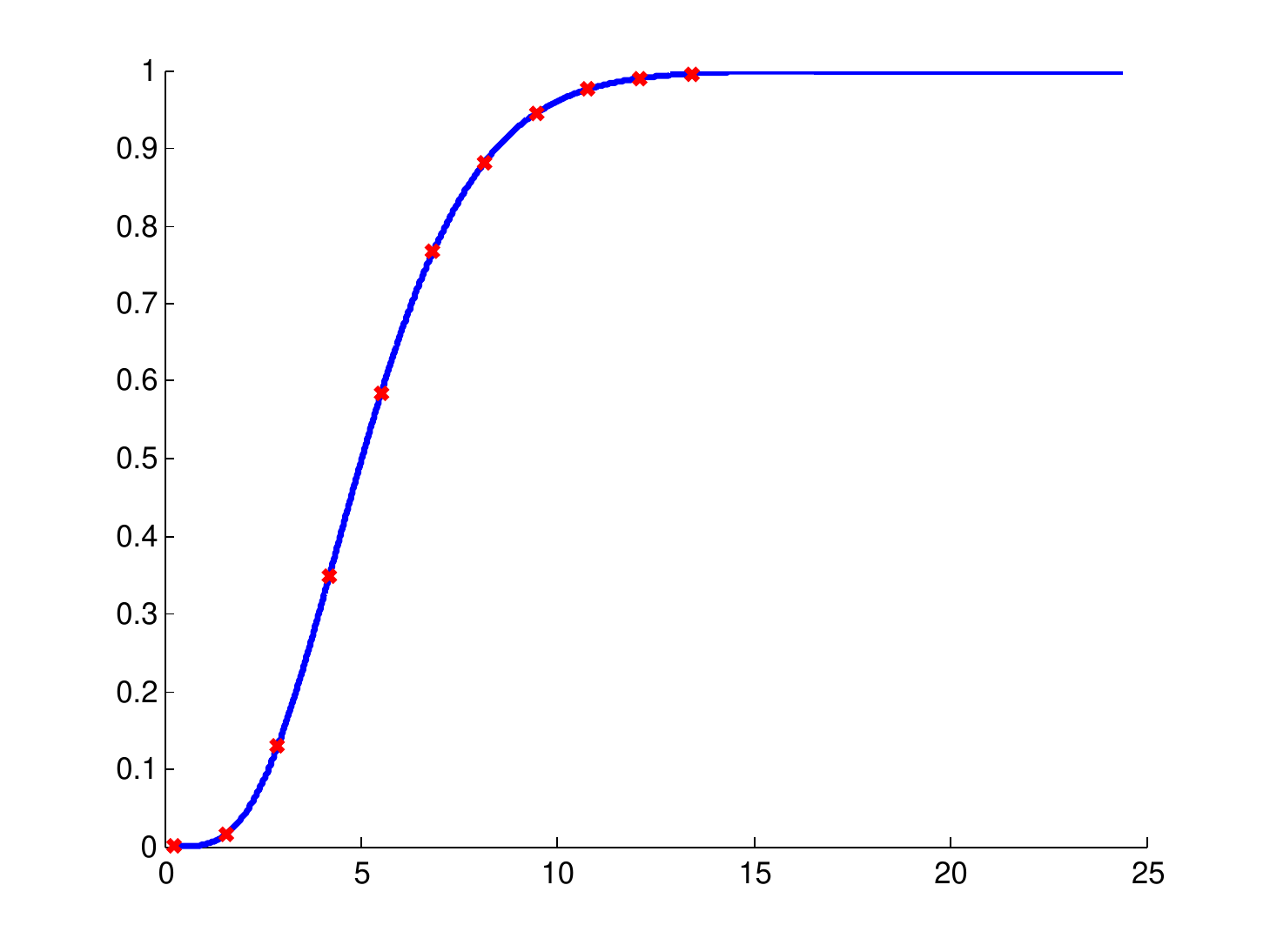}
\caption{The line is the empirical cdf created from many draws of the minimum eigenvalue of the $\beta$-Wishart ensemble, with $m = 4$, $n = 3$, $\beta = 5$, and $D = {\rm diag}(1.1,1.2,1.4)$. The x's are the analytically derived values of the cdf using Corollary 5 and {\tt mhg}.}
\end{figure}

\begin{figure}
\centering
\includegraphics[scale=.6]{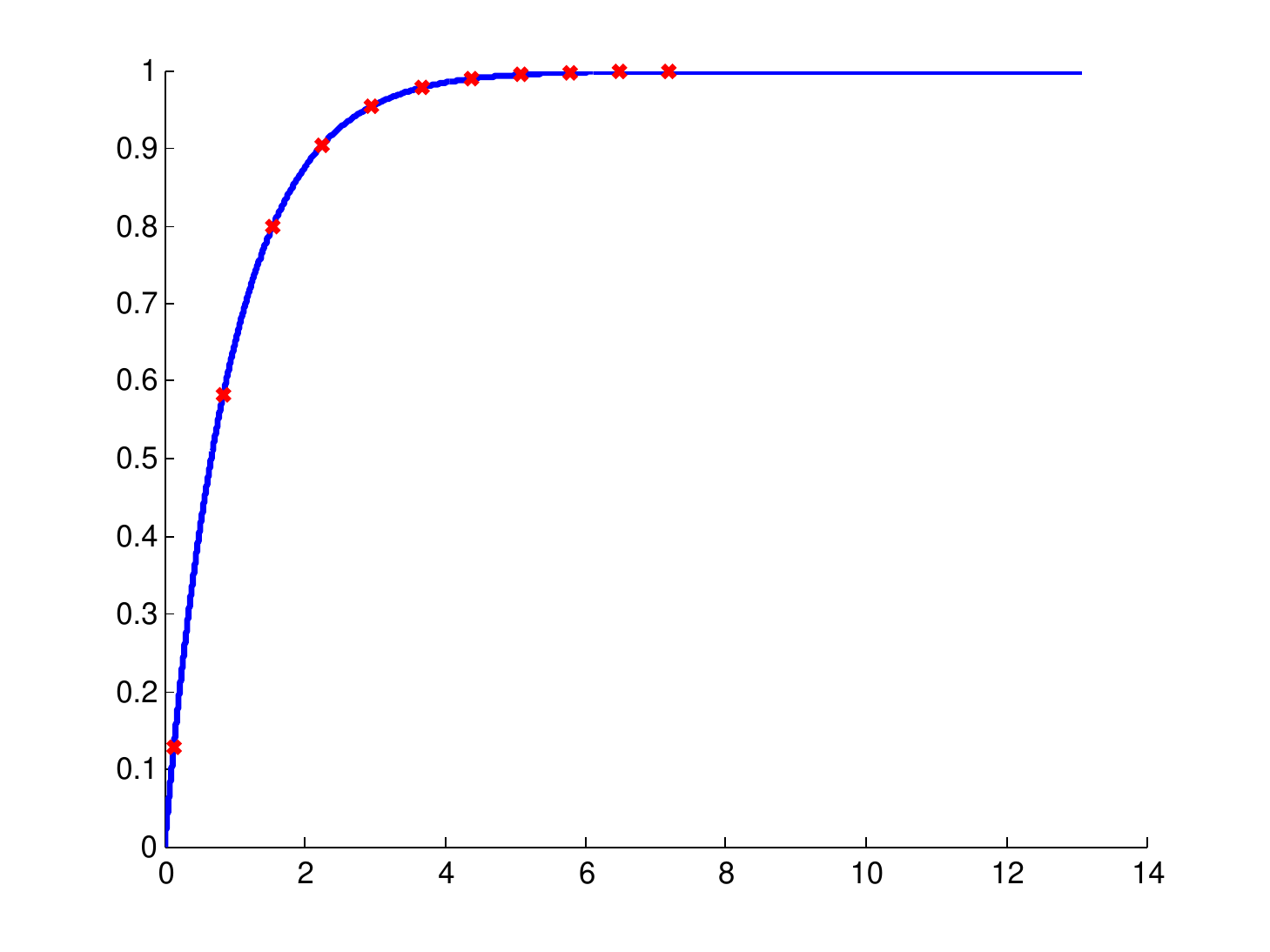}
\caption{The line is the empirical cdf created from many draws of the minimum eigenvalue of the $\beta$-Wishart ensemble, with $m = 7$, $n = 4$, $\beta = 0.5$, and $D = {\rm diag}(1,2,3,4)$. The x's are the analytically derived values of the cdf using Corollary 5 and {\tt mhg}.}
\end{figure}

\section{The $\beta$-Wishart Ensemble and Free Probability}

Given the eigenvalue distributions of two large random matrices, free probability allows one to analytically compute the eigenvalue distributions of the sum and product of those matrices (a good summary is Nadakuditi and Edelman\cite{Rao2007}). In particular, we would like to compute the eigenvalue histogram for $X^tXD/(m\beta)$, where $X$ is a tall matrix of standard normal reals, complexes, quaternions, or Ghosts, and $D$ is a positive definite diagonal matrix drawn from a prior. Dumitriu\cite{Dumitriu2003} proves that for the $D = I$ and $\beta = 1,2,4$ case, the answer is the Marcenko-Pastur law, invariant over $\beta$. So it is reasonable to assume that the value of $\beta$ does not figure into ${\rm hist}({\rm eig}(X^tXD))$, where $D$ is random.

We use the methods of Olver and Nadakuditi\cite{Olver2013} to analytically compute the product of the Marcenko-Pastur distribution for $m/n\longrightarrow 10$ and variance $1$ with the Semicircle distribution of width $2\sqrt{2}$ centered at $3$. Figure 5 demonstrates that the histogram of $1000$ draws of $X^tXD/(m\beta)$ for $m = 1000$, $n = 100$, and $\beta = 3$, represented as a bar graph, is equal to the analytically computed red line. The $\beta$-Wishart distribution allows us to draw the eigenvalues of $X^tXD/(m\beta)$, even if we cannot sample the entries of the matrix for $\beta = 3$.

\begin{figure}
\begin{center}
\includegraphics[scale=1]{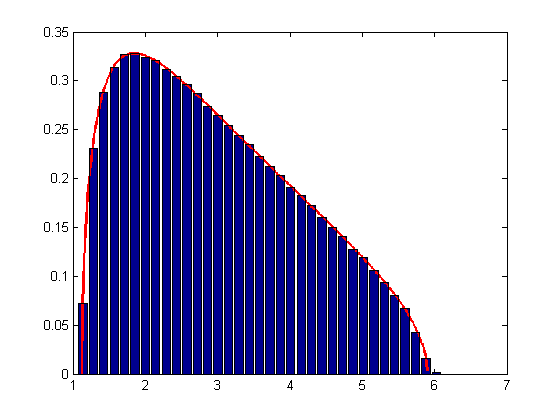}
\end{center}
\caption{The analytical product of the Semicircle and Marcenko-Pastur laws is the red line, the histogram is $1000$ draws of the $\beta$-Wishart ($\beta = 3$) with covariance drawn from the shifted semicircle distribution. They match perfectly.}
\end{figure}

\section{Acknowledgements}

We acknowledge the support of National Science Foundation through grants SOLAR
Grant No. 1035400, DMS-1035400, and DMS-1016086. Alexander Dubbs was funded by the NSF GRFP.

We also acknowledge the partial support by the Woodward Fund for Applied Mathematics at San Jose State University, a gift from the estate of Mrs.\ Marie Woodward in memory of her son, Henry Tynham Woodward. He was an alumnus of the Mathematics Department at San Jose State University and worked with research groups at NASA Ames.


\end{document}